\newcommand{\abs}[1]{\left| #1 \right|}
\newcommand{\okra}[1]{\left( #1 \right)}
\newcommand{\kwad}[1]{\left[ #1 \right]}
\newcommand{\klam}[1]{\left\{ #1 \right\}}
\DeclareMathOperator{\mean}{\mathbf{E}}
\DeclareMathOperator{\PPM}{PPM}
\DeclareMathOperator*{\argmax}{arg\, max}
\newcommand{\boole}[1]{{\bf 1}{\klam{#1}}}
\newtheorem{Definition}{Definition}
\newtheorem{Proposition}{Proposition}
\newtheorem{Theorem}{Theorem}
\author{{\L}ukasz D\k{e}bowski and Tomasz Steifer\thanks{%
    T. Steifer is with the Institute of Fundamental Technological
    Research, Polish Academy of Sciences, ul. Pawi\'nskiego 5B, 02-106
    Warszawa, Poland (e-mail: tsteifer@ippt.pan.pl). %
    {\L}. D\k{e}bowski is with the Institute of Computer Science,
    Polish Academy of Sciences, ul. Jana Kazimierza 5, 01-248
    Warszawa, Poland (e-mail: ldebowsk@ipipan.waw.pl). % 
  }
}
\title{Universal Coding and Prediction \\ on Ergodic Martin-L\"of
  Random Points} \date{}
\begin{document}

\maketitle

\begin{abstract}
  Suppose that we have a method which estimates the conditional
  probabilities of some unknown stochastic source and we use it to
  guess which of the outcomes will happen. We want to make a correct
  guess as often as it is possible. What estimators are good for this?
  In this work, we consider estimators given by a familiar notion of
  universal coding for stationary ergodic measures, while working in
  the framework of algorithmic randomness, i.e, we are particularly
  interested in prediction of Martin-L\"of random points. We outline
  the general theory and exhibit some counterexamples.  Completing a
  result of Ryabko from 2009 we also show that universal probability
  measure in the sense of universal coding induces a universal
  predictor in the prequential sense. Surprisingly, this implication
  holds true provided the universal measure does not ascribe too low
  conditional probabilities to individual symbols. As an example, we
  show that the Prediction by Partial Matching (PPM) measure satisfies
  this requirement with a large reserve.
    \\
    \textbf{Keywords}:
    algorithmic randomness; stationary ergodic processes; universal
    coding; universal prediction; prediction by partial matching
    \\
    \textbf{MSC 2000}:
    94A29, 62M20, 03D32
 \end{abstract}

% %MSC 2000: 
% % 94A29 - Source coding
% % 60G10 - Stationary processes
% % 94A17 - Measures of information, entropy
% % 03D32 - Algorithmic randomness and dimension
% % 68Q30 - Algorithmic information theory (Kolmogorov complexity, etc.)
% % 62M20 - Inference from stochastic processes and prediction

% \end{titlepage}
% \pagestyle{plain}   

% \tableofcontents

\section{Introduction}
\label{secIntro}

A sequence of outcomes $X_1,X_2,\ldots$ coming from a finite alphabet
is drawn in a sequential manner from an unknown stochastic source
$P$. At each moment a finite prefix $X_1^n=(X_1,X_2,\ldots,X_n)$ is
available.  The forecaster has to predict the next outcome using this
information. The task may take one of the two following forms. In the
first scenario, the forecaster simply makes a guess about the next
outcome. The forecaster's performance is then assessed by comparing
the guess with the outcome. This scenario satisfies the weak
prequential principle of Dawid \cite{dawid1999prequential}. In the
second case, we allow the forecaster to be uncertain, namely, we ask
them to assign a probability value for each of the outcomes. These
values may be interpreted as estimates of the conditional
probabilities $P(X_{n+1}|X_1^n)$. Various criteria of success may be
chosen here such as the quadratic difference of distributions or the
Kullback-Leibler divergence. The key aspect of both problems is that
we assume limited knowledge about the true probabilities governing the
process that we want to forecast. Thus, an admissible solution should
achieve the optimal results for an arbitrary process from some general
class. For clarity, term ``universal predictor'' will be used to
denote the solution of guessing the outcome, while the solution of
estimation of probabilities will be referred to as ``universal
estimator'', ``universal measure'', or ``universal code'' depending on
the exact meaning.

In the literature, it is frequently assumed without explanation that
universal predictors and universal estimators reduce to another in
some way.  Such simplification is not without a rationale. It is known
that for a fixed stochastic source $P$, the optimal prediction is
given by the predictor induced by $P$, i.e., the informed scheme which
predicts the outcome with the largest conditional probability
$P(X_{n+1}|X_1^n)$ \cite{Algoet94}. In particular, a good universal
estimator should induce a good universal predictor. That being said,
the devil is hidden in the details such as what is meant by a ``good''
universal code, measure, estimator, or predictor.

In this paper, we will assume that the unknown stochastic source $P$
lies in the class of stationary ergodic measures. Moreover, we are
concerned with universal measures which are universal in the
information-theoretic sense of universal coding, i.e., the rate of
Kullback-Leibler divergence of the estimate and the true measure $P$
vanishes for any stationary ergodic measure. As for universal
predictors, we assume that the rate of correct guesses is equal to the
respective rate for the predictor induced by measure $P$.  In this
setting, a universal measure need not belong to the class of
stationary ergodic measures and can be computable, which makes the
problem utterly practical. Our framework should be contrasted with
universal prediction \`a la Solomonoff for left-c.e.\ semimeasures
where the universal semimeasure belongs to the class and is not
computable \cite{Solomonoff64}. In general, existence of a universal
measure for an arbitrary class of probability measures can be linked
to separability of the considered class \cite{Ryabko10b}.

Now, we can ask the question whether a universal measure in the above
sense of universal coding induces a universal predictor. Curiously,
this simple question has not been unambiguously answered in the
literature, although a host of related propositions were compiled by
Suzuki \cite{Suzuki03} and Ryabko \cite{Ryabko08,Ryabko09}, see also
\cite{RyabkoAstolaMalyutov16}.  It was shown by Ryabko
\cite{Ryabko09}, see also \cite{RyabkoAstolaMalyutov16}, that the
expected value of the average absolute difference between the
conditional probability for a universal measure and the true value
$P(X_{n+1}|X_1^n)$ converges to zero almost surely for any stationary
ergodic measure $P$.  Ryabko \cite{Ryabko09} showed also that there
exists a universal measure that induces a universal predictor. As we
argue in this paper, this result does not solve the general problem.

Completing works \cite{Suzuki03,Ryabko08,Ryabko09}, in this paper we
will show that any universal measure $R$ in the sense of universal
coding that additionally satisfies a uniform bound
\begin{align}
  \label{CondDominationIntro}
  -\log R(X_{n+1}|X_1^n)&\le\epsilon_n\sqrt{n/\ln n},
                          \quad \lim_{n\to\infty} \epsilon_n=0
\end{align}
induces a universal predictor, indeed. On our way, we will use the
Breiman ergodic theorem \cite{Breiman57} and the Azuma inequality for
martingales with bounded increments \cite{Azuma67}, which is the
source of condition (\ref{CondDominationIntro}).  It is left open
whether this condition is necessary. Fortunately, condition
(\ref{CondDominationIntro}) is satisfied by reasonable universal
measures such as the Prediction by Partial Matching (PPM) measure
\cite{ClearyWitten84,Ryabko88en2,Ryabko08}, which we also show in this
paper. It may be interesting to exhibit universal measures for which
this condition fails. There is a large gap between bound
(\ref{CondDominationIntro}) and the respective bound for the PPM
measure, which begs for further research.

To add more weight and to make the problem interesting from a
computational perspective, we consider this topic in the context of
algorithmic randomness and we seek for effective versions of
probabilistic statements. Effectivization is meant as the research
program of reformulating almost sure statements into respective
statements about algorithmically random points, i.e., algorithmically
random infinite sequences. Any plausible class of random points is of
measure one, see \cite{DowneyHirschfeldt10}, and the effective
versions of theorems substitute phrase ``almost surely'' with ``on all
algorithmically random points''. Usually, randomness in the
Martin-L\"of sense is the desired goal \cite{MartinLof66}. In many
cases, the standard proofs are already constructive, effectivization
of some theorems asks for developing new proofs, but sometimes the
effective versions are false.

In this paper, we will successfully show that the algorithmic
randomness theory is mature enough to make the theory of universal
coding and prediction for stationary ergodic sources effective in the
Martin-L\"of sense. The main keys to this success are: the framework
for randomness with respect to uncomputable measures by Reimann and
Slaman \cite{Reimann08,ReimannSlaman15}, the effective Birkhoff
ergodic theorem
\cite{vyugin1998ergodic,BienvenuOthers12,FranklinOthers12},
generalized by us to the version resembling Breiman's ergodic theorem
\cite{Breiman57}, and an effective Azuma theorem, which follows from a
result of Solovay (unpublished, see
\cite{DowneyHirschfeldt10})---which we call here the effective
Borel-Cantelli lemma---and the Azuma inequality \cite{Azuma67}. As a
little surprise there is also a negative result concerning universal
forward estimators---Theorem \ref{theoSpoiltInduced}. Not everything
can be made effective.

The organization of the paper is as follows. In Section
\ref{secPreliminaries}, we discuss preliminaries: notation (Subsection
\ref{secNotation}), stationary and ergodic measures (Subsection
\ref{secErgodic}), algorithmic randomness (Subsection
\ref{secRandomness}), and some known effectivizations (Subsection
\ref{secKnown}). Section \ref{secMain} contains main results
concerning: universal coding (Subsection \ref{secCoding}), universal
prediction (Subsection \ref{secPrediction}), universal predictors
induced by universal backward estimators (Subsection \ref{secInduced})
and by universal codes (Subsection \ref{secInducedII}), as well as the
PPM measure (Subsection \ref{secPPM}), which constitutes a simple
example of a universal code and a universal predictor.

\section{Preliminaries}
\label{secPreliminaries}

In this section we familiarize the readers with our notation, we
recall the concepts of stationary and ergodic measures, we discuss
various sorts of algorithmic randomness, and we recall known facts
from the effectivization program.

\subsection{Notation}
\label{secNotation}

Throughout this paper, we consider the standard measurable space
$(\mathbb{X}^{\mathbb{Z}},\mathcal{X}^{\mathbb{Z}})$ of two-sided
infinite sequences over a finite alphabet
$\mathbb{X}=\klam{a_1,..,a_D}$, where $D\ge 2$.  (Occasionaly, we also
apply the space of one-sided infinite sequences
$(\mathbb{X}^{\mathbb{N}},\mathcal{X}^{\mathbb{N}})$.)  The points of
the space are (infinite) sequences
$x=(x_i)_{i\in\mathbb{Z}}\in\mathbb{X}^{\mathbb{Z}}$. We also denote
(finite) strings $x_j^k=(x_i)_{j\le i\le k}$, where
$x_j^{j-1}=\lambda$ equals the empty string. By
$\mathbb{X}^*=\bigcup_{n\ge 0}\mathbb{X}^n$ we denote the set of
strings of an arbitrary length including the singleton
$\mathbb{X}^0=\klam{\lambda}$. We use random variables
$X_k((x_i)_{i\in\mathbb{Z}}):=x_k$. Having these, the $\sigma$-field
$\mathcal{X}^{\mathbb{Z}}$ is generated by cylinder sets
$(X_{-|\sigma|+1}^{|\tau|}=\sigma\tau)$ for all
$\sigma,\tau\in\mathbb{X}^*$.  We tacitly assume that $P$ and $R$
denote probability measures on
$(\mathbb{X}^{\mathbb{Z}},\mathcal{X}^{\mathbb{Z}})$. For any
probability measure $P$, we use the shorthand notations
$P(x_1^n):=P(X_1^n=x_1^n)$ and
$P(x_j^n|x_1^{j-1}):=P(X_j^n=x_j^n|X_1^{j-1}=x_1^{j-1})$.  Notation
$\log x$ denotes the binary logarithm, whereas $\ln x$ is the natural
logarithm.

\subsection{Stationary and ergodic measures}
\label{secErgodic}

Let us denote the measurable shift operation
$T((x_i)_{i\in\mathbb{Z}}):=(x_{i+1})_{i\in\mathbb{Z}}$ for two-sided
infinite sequences $(x_i)_{i\in\mathbb{Z}}\in\mathbb{X}^{\mathbb{Z}}$.
\begin{Definition}[stationary measures]
  A probability measure $P$ on
  $(\mathbb{X}^{\mathbb{Z}},\mathcal{X}^{\mathbb{Z}})$ is called
  stationary if $P(T^{-1}(A))=P(A)$ for all events
  $A\in\mathcal{X}^{\mathbb{Z}}$.
\end{Definition}
\begin{Definition}[ergodic measures]
  A probability measure $P$ on
  $(\mathbb{X}^{\mathbb{Z}},\mathcal{X}^{\mathbb{Z}})$ is called
  ergodic if for each event $A\in\mathcal{X}^{\mathbb{Z}}$ such that
  $T^{-1}(A)=A$ we have either $P(A)=1$ or $P(A)=0$.
\end{Definition}
The class of stationary ergodic probability measures has various nice
properties guaranteed by the collection of fundamental results called
ergodic theorems. Typically stationary ergodic measures are not
computable (e.g., consider independent biased coin tosses with a
common uncomputable bias) but they allow for computable universal
coding and prediction on algorithmically random points, as it will be
explained in Section \ref{secMain}.

\subsection{Sorts of randomness}
\label{secRandomness}

Now let us discuss some computability notions. In the following,
\textit{computably enumerable} is abbreviated as \textit{c.e.} Given a
real $r$, the set $\klam{q\in\mathbb{Q}:q<r}$ is called the left cut
of $r$. A real function $f$ with arguments in a countable set is
called computable or left-c.e.\ respectively if the left cuts of
$f(\sigma)$ are uniformly computable or c.e.\ given an enumeration of
$\sigma$. For an infinite sequence $s\in\mathbb{X}^{\mathbb{Z}}$, we
say that real functions $f$ are $s$-computable or $s$-left-c.e.\ if
they are computable or left-c.e.\ with oracle $s$. Similarly, for
a real function $f$ taking arguments in $\mathbb{X}^{\mathbb{Z}}$, we
will say that $f$ is $s$-computable or $s$-left-c.e.\ if left cuts of
$f(x)$ are uniformly computable or c.e.\ with oracles
$x\oplus s:=(...,x_{-1},s_{-1},x_0,s_0,x_1,s_1,...)$. This induces
in effect $s$-computable and $s$-left-c.e.\ random variables on
$(\mathbb{X}^{\mathbb{Z}},\mathcal{X}^{\mathbb{Z}})$.

For stationary ergodic measures, we need a definition of
algorithmically random points with respect to an arbitrary, i.e.,
not necessarily computable probability measure on
$(\mathbb{X}^{\mathbb{Z}},\mathcal{X}^{\mathbb{Z}})$.  A simple
definition thereof was proposed by Reimann \cite{Reimann08} and
Reimann and Slaman \cite{ReimannSlaman15}. This definition is
equivalent to earlier approaches by Levin
\cite{Levin73,Levin76,Levin84} and G\'acs \cite {Gacs05} as shown by
Day and Miller \cite{DayMiller13} and we will use it since it leads to
straightforward generalizations of the results in Section
\ref{secKnown}. The definition is based on measure representations.
Let $\mathcal{P}(\mathbb{X}^{\mathbb{Z}})$ be the space of probability
measures on $(\mathbb{X}^{\mathbb{Z}},\mathcal{X}^{\mathbb{Z}})$. A
measure $P\in\mathcal{P}(\mathbb{X}^{\mathbb{Z}})$ is called
$s$-computable if real function
$(\sigma,\tau)\mapsto P(X_{-|\sigma|+1}^{|\tau|}=\sigma\tau)$ is
$s$-computable. Similarly, a representation function is a function
$\rho:\mathbb{X}^{\mathbb{Z}}\rightarrow\mathcal{P}(\mathbb{X}^{\mathbb{Z}})$
such that real function
$(\sigma,\tau,s)\mapsto \rho(s)(X_{-|\sigma|+1}^{|\tau|}=\sigma\tau)$
is computable.  Subsequently, we say that an infinite sequence
$s\in\mathbb{X}^{\mathbb{Z}}$ is a representation of measure $P$ if
there exists a representation function $\rho$ such that
$\rho(s)=P$. We note that any measure $P$ is $s$-computable for any
representation $s$ of $P$.

We will consider two important sorts of algorithmically random points:
Martin-L\"of or 1-random points and weakly 2-random points with
respect to an arbitrary stationary ergodic measure $P$ on
$(\mathbb{X}^{\mathbb{Z}},\mathcal{X}^{\mathbb{Z}})$.  Note that the
following notions are typically defined for one-sided infinite
sequences over the binary alphabet and computable measures $P$. In the
following parts of this paper, let an infinite sequence
$s\in\mathbb{X}^{\mathbb{Z}}$ be a representation of measure $P$.
\begin{Definition}
  A collection of events
  $U_1,U_2,\ldots\in\mathcal{X}^{\mathbb{Z}}$ is called uniformly
  $s$-c.e.\ if and only if there is a collection of sets
  $V_1,V_2,\ldots\subset\mathbb{X}^*\times\mathbb{X}^*$ such that
  $$U_i=\klam{x\in\mathbb{X}^{\mathbb{Z}}:\exists (\sigma,\tau)\in V_i:
    x_{-|\sigma|+1}^{|\tau|}=\sigma\tau}$$ and sets $V_1,V_2,\ldots$
  are uniformly $s$-c.e.
\end{Definition}
\begin{Definition}[Martin-L\"of test]
  A uniformly $s$-c.e.\ collection of events
  $U_1,U_2,\ldots\in\mathcal{X}^{\mathbb{Z}}$ is called a
  Martin-L\"of $(s,P)$-test if $P(U_n)\leq 2^{-n}$ for every
  $n\in\mathbb{N}$.
\end{Definition}
\begin{Definition}[Martin-L\"of or 1-randomness] 
  A point $x\in\mathbb{X}^{\mathbb{Z}}$ is called Martin-L\"of
  $(s,P)$-random or $1$-$(s,P)$-random if for each Martin-L\"of
  $(s,P)$-test $U_1,U_2,\ldots$ we have $x\not\in\bigcap_{i\ge 1}
  U_i$. A point is called Martin-L\"of $P$-random or $1$-$P$-random if
  it is $1$-$(s,P)$-random for some representation $s$ of $P$.
\end{Definition}

Subsequently, an event $C\in\mathcal{X}^{\mathbb{Z}}$ is called a
$\Sigma^0_2(s)$ event if there exists a uniformly $s$-c.e.\ sequence
of events $U_1,U_2,\ldots$ such that
$\mathbb{X}^{\mathbb{Z}}\setminus C=\bigcap_{i\ge 1} U_i$.
\begin{Definition}[weak $2$-randomness]
  A point $x\in\mathbb{X}^{\mathbb{Z}}$ is called weakly
  $2$-$(s,P)$-random if $x$ is contained in every $\Sigma^0_2(s)$
  event $C$ such that $P(C)=1$. A point is called weakly
  $2$-$P$-random if it is weakly $2$-$(s,P)$-random for some
  representation $s$ of $P$.
\end{Definition}
The sets of weakly $2$-random points are strictly smaller than the
respective sets of $1$-random points, see
\cite{DowneyHirschfeldt10}.

In general, there is a whole hierarchy of algorithmically random
points, such as (weakly) $n$-random points, where $n$ runs over
natural numbers. For our purposes, however, only $1$-random points and
weakly $2$-random points matter since the following proposition sets
the baseline for effectivization:
\begin{Proposition}[folklore]\label{theoEffectivization}
  Let $Y_1,Y_2,\ldots$ be a sequence of uniformly $s$-computable
  random variables. If limit $\lim_{n\to\infty}Y_n$ exists $P$-almost
  surely, then it exists on all weakly $2$-$(s,P)$-random points.
\end{Proposition}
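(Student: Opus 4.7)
My strategy is to cover the ``bad'' set $E:=\klam{x:\lim_{n\to\infty} Y_n(x) \text{ does not exist}}$ by a countable family of $\Pi^0_2(s)$ null sets, one per Cauchy scale, so that weak $2$-randomness, which forbids membership in any such set, forces the limit to exist on every weakly $2$-$(s,P)$-random point.

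Concretely, for each integer $k\ge 1$ I would set
\begin{equation*}
  B_k := \bigcap_{N\ge 1}\;\bigcup_{n,m\ge N}\;
  \klam{x\in\mathbb{X}^{\mathbb{Z}}:\abs{Y_n(x)-Y_m(x)}>1/k}.
\end{equation*}
Since the $Y_n$ are uniformly $s$-computable, each difference $Y_n-Y_m$ is $s$-computable, and strict sublevel sets pulled back from its enumerable left cut are uniformly $s$-c.e.\ as events in the sense of the preceding definition. Hence the inner unions form a uniformly $s$-c.e.\ family of events indexed by $N$ and $k$, which by definition makes each $B_k$ a $\Pi^0_2(s)$ event, i.e., its complement is $\Sigma^0_2(s)$.

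Two elementary set-theoretic checks complete the picture. First, if $Y_n(x)$ converges then it is Cauchy, so $\sup_{n,m\ge N}\abs{Y_n(x)-Y_m(x)}\to 0$ and for all large $N$ no pair witnesses a $1/k$-gap; thus $B_k\subseteq E$ and so $P(B_k)\le P(E)=0$. Second, if $x\in E$ then $Y_n(x)$ fails to be Cauchy, so $\inf_N \sup_{n,m\ge N}\abs{Y_n(x)-Y_m(x)}>0$, and any $k$ with $1/k$ strictly below this infimum places $x$ in $B_k$; hence $E\subseteq\bigcup_{k\ge 1}B_k$.

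Combining the two, the complement $\mathbb{X}^{\mathbb{Z}}\setminus B_k$ is a $\Sigma^0_2(s)$ event of $P$-measure one for every $k$, so by the very definition of weak $2$-randomness every weakly $2$-$(s,P)$-random $x$ lies in
$\bigcap_{k\ge 1}(\mathbb{X}^{\mathbb{Z}}\setminus B_k)=\mathbb{X}^{\mathbb{Z}}\setminus\bigcup_{k\ge 1}B_k$,
which is disjoint from $E$. Therefore $\lim_{n\to\infty} Y_n(x)$ exists. The only delicate step is the arithmetic complexity bookkeeping---one must explicitly confirm that the strict-inequality events for an $s$-computable real random variable are indeed uniformly $s$-c.e.\ as events; this follows immediately from the left-cut formulation of $s$-computability adopted in Subsection \ref{secRandomness} but deserves one sentence of justification in a full write-up.
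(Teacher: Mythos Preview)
Your argument is correct and is in fact more careful than the paper's own justification. The paper dismisses the proposition in one sentence, asserting that ``the set of points on which limit $\lim_{n\to\infty}Y_n$ exists is a $\Sigma^0_2(s)$ event.'' Taken literally this is inaccurate: the Cauchy criterion gives the convergence set the form $\bigcap_k\bigcup_N\bigcap_{n,m\ge N}\{\abs{Y_n-Y_m}\le 1/k\}$, which is $\Pi^0_3(s)$, not $\Sigma^0_2(s)$. Your decomposition is exactly the standard repair: you do not claim the convergence set itself is $\Sigma^0_2(s)$, but rather exhibit its complement as a countable union $\bigcup_k B_k$ of $\Pi^0_2(s)$ null sets, so that each complement $\mathbb{X}^{\mathbb{Z}}\setminus B_k$ is a $\Sigma^0_2(s)$ event of full measure and weak $2$-randomness places $x$ in every one of them. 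This is presumably what the authors had in mind, and your write-up makes it explicit. The one point you flag---that strict-inequality events for an $s$-computable real random variable are uniformly $s$-c.e.---is indeed routine from the left-cut definition and deserves the single sentence you propose.
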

The above proposition is obvious since the set of points on which
limit $\lim_{n\to\infty}Y_n$ exists is a $\Sigma^0_2(s)$ event.  The
effectivization program aims to strengthen the above claim to
$1$-$P$-random points (or even weaker notions such as Schnorr
randomness) but this need not always be feasible.
In particular, one can observe that:
\begin{Proposition}[folklore]\label{theoDelta02}
  Let $P$ be a non-atomic computable measure on
  $\mathbb{X}^{\mathbb{N}}$.  Then there exists a computable function
  $f:\mathbb{X}^*\rightarrow\{0,1\}$ such that the limit
  $\lim_{n\to\infty}f(X_1^n)$ exists and is equal zero $P$-almost
  surely but it is not defined on exactly one point, which is
  $1$-$P$-random.
\end{Proposition}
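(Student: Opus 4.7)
The construction produces a $\Delta^0_2$ $1$-$P$-random point $x^* \in \mathbb{X}^{\mathbb{N}}$ together with a computable $f$ whose values $f(x^*_1^n)$ oscillate while $f(y_1^n) \to 0$ for every $y \neq x^*$. Since $P$ is non-atomic, $P(\{x^*\}) = 0$, so the limit exists and equals $0$ $P$-almost surely.

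First I would produce $x^*$ and a controlled computable approximation of it. Let $(U_i)_{i \ge 1}$ be a universal Martin-L\"of $P$-test (which exists because $P$ is computable), and consider the $\Pi^0_1$ class $C := \mathbb{X}^{\mathbb{N}} \setminus U_1$; it consists entirely of $1$-$P$-random points, has $P(C) \ge 1/2$, and hence is uncountable by non-atomicity of $P$. Take $x^*$ to be the lex-smallest element of $C$; then $x^*_1^n$ equals the lex-smallest $\tau \in \mathbb{X}^n$ with $[\tau] \not\subseteq U_1$. Writing $U_1^{(s)}$ for the stage-$s$ approximation of $U_1$ and $\pi_n^{(s)}$ for the lex-smallest $\tau \in \mathbb{X}^n$ with $[\tau] \not\subseteq U_1^{(s)}$, the map $(n,s) \mapsto \pi_n^{(s)}$ is uniformly computable, lex-monotone non-decreasing in $s$, and satisfies $\lim_s \pi_n^{(s)} = x^*_1^n$. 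By choosing the enumeration of $U_1$ judiciously, I would arrange that $\pi_n^{(n)} = x^*_1^n$ for all sufficiently large $n$.

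Second I would define $f(\sigma) := \boole{|\sigma| \text{ is even and } \sigma = \pi_{|\sigma|}^{(|\sigma|)}}$, manifestly computable. For any $y \ne x^*$, let $k$ be the least index where $y_k \ne x^*_k$; for all large $n$, $\pi_n^{(n)} = x^*_1^n$ begins with $x^*_1^k \ne y_1^k$, so $\pi_n^{(n)} \ne y_1^n$ and hence $f(y_1^n) = 0$. For $y = x^*$, $f(x^*_1^n) = 1$ for all large even $n$ and $f(x^*_1^n) = 0$ for all odd $n$, so $\lim_n f(x^*_1^n)$ does not exist.

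The main obstacle is securing $\pi_n^{(n)} = x^*_1^n$ for all large $n$: the stabilization stage depends on how quickly the cylinders $[\tau]$ with $\tau$ lex-less than $x^*_1^n$ become enumerated into $U_1$, which may happen slowly under a naive enumeration. This is handled by combining an acceleration of the enumeration with a careful initial choice of universal test, using that $U_1$ is a c.e.\ open set and so admits computable re-enumerations of arbitrarily fast preassigned speed. Once this is in place, the verification above is immediate.
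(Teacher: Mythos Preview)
Your construction of $x^*$ as the leftmost element of the $\Pi^0_1$ class $\mathbb{X}^{\mathbb{N}}\setminus U_1$ is fine and is exactly how one instantiates the paper's one-line appeal to the existence of $\Delta^0_2$ $1$-$P$-random points. The fatal gap is the claim that one can arrange $\pi_n^{(n)}={(x^*)}_1^n$ for all sufficiently large $n$. If this held for every $n\ge N$, then the algorithm that hardcodes the constant $N$ and on input $k$ returns the $k$-th symbol of $\pi_{\max(k,N)}^{(\max(k,N))}$ would compute $x^*$; but a $1$-$P$-random point for a non-atomic computable $P$ is never computable. No ``acceleration of the enumeration'' can rescue this: if some computable speed-up $g$ gave $\pi_n^{(g(n))}={(x^*)}_1^n$ eventually, the same hardcoding argument would again make $x^*$ computable. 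Equivalently, the settling-time function $n\mapsto\min\{s:\pi_n^{(s)}={(x^*)}_1^n\}$ is not dominated by any computable function, so in particular not eventually by the identity, regardless of how $U_1$ is re-enumerated.

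A correct route uses the $\Delta^0_2$-ness of $x^*$ without demanding a computable modulus. Since $\{x^*\}$ is a $\Pi^0_2$ singleton, write $\{x^*\}=\bigcap_m W_m$ with $(W_m)_m$ uniformly c.e.\ open and nested, and let $W_m^{(s)}$ denote the stage-$s$ approximation. Define $h(\lambda):=0$ and $h(\sigma a):=h(\sigma)+\boole{[\sigma a]\subseteq W_{h(\sigma)+1}^{(|\sigma a|)}}$. Then $h$ is computable, nondecreasing along prefixes with unit increments, and $h(y_1^n)\to\infty$ iff $y=x^*$; hence $R(\sigma):=\boole{h(\sigma)>h(\sigma_1^{|\sigma|-1})}$ satisfies $R(y_1^n)=1$ infinitely often iff $y=x^*$. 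Finally $f(\sigma):=\boole{|\sigma|\text{ even}}\cdot R(\sigma_1^{|\sigma|/2})$ is computable, vanishes at every odd length, equals $1$ infinitely often along $x^*$, and tends to $0$ along every $y\neq x^*$, so the limit fails exactly at the single $1$-$P$-random point $x^*$.
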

This fact is a simple consequence of the existence of $\Delta^0_2$
$1$-$P$-random sequences (for a computable $P$) and may be also
interpreted in terms of learning theory (cf.\
\cite{OshersonWeinstein08} and the upcoming paper
\cite{SteiferLearning}).

\subsection{Known effectivizations}
\label{secKnown}

Many probabilistic theorems have been effectivized so far.  Usually
they were stated for computable measures but their generalizations for
uncomputable measures follow easily by relativization, i.e., putting a
representation $s$ of measure $P$ into the oracle. In this section, we
list several known effectivizations of almost sure theorems which we
will use further.

As shown by Solovay (unpublished, see \cite{DowneyHirschfeldt10}), we
have this effective version of the Borel-Cantelli lemma:
\begin{Proposition}[effective Borel-Cantelli lemma]
  \label{theoBorelCantelli} Let $P$ be a probability measure.  If a
  uniformly $s$-c.e.\ sequence of events
  $U_0,U_1,\ldots\in\mathcal{X}^{\mathbb{Z}}$ satisfies
  $\sum_{i=1}^\infty P(U_n)<\infty$ then
  $\sum_{i=1}^\infty \boole{x\in U_n}<\infty$ on each
  $1$-$(s,P)$-random point $x$.
\end{Proposition}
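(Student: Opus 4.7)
The plan is to encode the conclusion ``$x\in U_n$ for only finitely many $n$'' into the failure of a single Martin-L\"of $(s,P)$-test, and then read off the claim from the definition of $1$-$(s,P)$-randomness. Rather than truncating the sequence at a threshold $N_k$ with $\sum_{n\ge N_k}P(U_n)\le 2^{-k}$---which is the textbook Borel--Cantelli strategy but runs into the snag that each $P(U_n)$ is only $s$-left-c.e., so such an $N_k$ need not be computable from $s$---I would bypass the issue by thresholding the counting random variable itself. For each $k\ge 1$, define
$$
C_k=\klam{x\in\mathbb{X}^{\mathbb{Z}}:\sum_{n=0}^{\infty}\boole{x\in U_n}\ge k}=\bigcup_{n_1<n_2<\cdots<n_k}\;\bigcap_{i=1}^{k}U_{n_i}.
$$
Finite intersections of $s$-c.e.\ cylinder unions remain $s$-c.e.\ (compatible cylinders intersect in a cylinder, incompatible ones in the empty set), and countable unions of uniformly $s$-c.e.\ events are $s$-c.e., so $\klam{C_k}_{k\ge 1}$ is uniformly $s$-c.e.

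Now apply Markov's inequality to the nonnegative random variable $N(x):=\sum_{n}\boole{x\in U_n}$, whose expectation equals $L:=\sum_{n} P(U_n)<\infty$ by Tonelli. This gives $P(C_k)=P(N\ge k)\le L/k$. Since $L$ is finite, fix any natural number $k_0$ with $L\le 2^{k_0}$ and set $W_m:=C_{2^{m+k_0}}$. Then $P(W_m)\le 2^{-m}$, and $\klam{W_m}_{m\ge 1}$ is still uniformly $s$-c.e., since $k_0$ enters only as a hard-coded constant in the enumeration algorithm. Hence $\klam{W_m}$ is a genuine Martin-L\"of $(s,P)$-test. If $x$ is $1$-$(s,P)$-random, the defining property of $1$-randomness yields some $m_0$ with $x\notin W_{m_0}$, and hence $\sum_n \boole{x\in U_n}<2^{m_0+k_0}<\infty$, which is the desired conclusion.

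The main conceptual subtlety---and the step I would scrutinize most carefully---is the non-uniformity in $k_0$. Because each $P(U_n)$ is only $s$-left-c.e., the sum $L$ is merely an $s$-left-c.e.\ real whose value we cannot in general approximate from above, so no algorithm with oracle $s$ can produce an integer upper bound on $L$. Fortunately this is not a real obstruction: the notion of a Martin-L\"of test requires only that the test family be uniformly $s$-c.e., not that it be computably presented from any auxiliary data; the mere existence of a suitable $k_0$, guaranteed by $L<\infty$, is enough to pin down one concrete test. This ``hidden constant'' trick is precisely the point at which one must exploit the absoluteness of $1$-randomness rather than attempt to construct an explicit witness uniformly in $s$.
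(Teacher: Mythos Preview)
The paper does not actually prove this proposition: it states the result and attributes it to Solovay (unpublished, with a pointer to Downey--Hirschfeldt), so there is no in-paper proof to compare against. Your argument is correct and is essentially the standard Solovay-test construction one finds in that reference: form the level sets $C_k=\{x:\sum_n\boole{x\in U_n}\ge k\}$, bound $P(C_k)$ by Markov's inequality, and thin along a geometric subsequence to obtain a genuine Martin-L\"of $(s,P)$-test. Your handling of the two technical points---that finite intersections of cylinder unions in $\mathbb{X}^{\mathbb{Z}}$ remain uniformly $s$-c.e., and that the non-$s$-computable shift $k_0$ is harmless because the test definition asks only for existence of a uniformly $s$-c.e.\ family, not for uniform producibility from the data---is accurate and well explained.
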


By the effective Borel-Cantelli lemma, Proposition
\ref{theoBorelCantelli}, follows the effective version of the Barron
lemma \cite[Theorem 3.1]{Barron85b}:
\begin{Proposition}[effective Barron lemma]
  \label{theoBarron}
  For any probability measure $P$ and any $s$-computable probability
  measure $R$, on $1$-$(s,P)$-random points we have
  \begin{align}
    \lim_{n\to\infty} \kwad{-\log R(X_1^n)+\log P(X_1^n)+2\log n}=\infty.
  \end{align}
\end{Proposition}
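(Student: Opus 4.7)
The plan is to adapt Barron's classical Borel--Cantelli argument by replacing the ordinary Borel--Cantelli lemma with its effective counterpart, Proposition \ref{theoBorelCantelli}. To obtain divergence to $+\infty$ rather than a mere lower bound, I would parametrise the argument by an integer $k$.

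First, for each $k\ge 1$ I would introduce the cylinder event
\[
  A_n^{(k)} := \klam{x\in\mathbb{X}^{\mathbb{Z}} : R(x_1^n) > 2^{-k}\,n^2\, P(x_1^n)}.
\]
A one-line Markov-type estimate gives
\[
  P(A_n^{(k)})
  = \sum_{\sigma\in\mathbb{X}^n,\ R(\sigma)>2^{-k}n^2 P(\sigma)} P(\sigma)
  \;\le\; \frac{2^k}{n^2}\sum_{\sigma\in\mathbb{X}^n} R(\sigma)
  \;=\; \frac{2^k}{n^2},
\]
so $\sum_{n} P(A_n^{(k)})<\infty$ for every fixed $k$.

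Next I would verify that $(A_n^{(k)})_{n\ge 1}$ is uniformly $s$-c.e. Since $R$ is $s$-computable by hypothesis and $P$ is $s$-computable because $s$ is a representation of $P$, the reals $R(\sigma)$ and $P(\sigma)$ are uniformly $s$-computable in $\sigma\in\mathbb{X}^*$, and strict inequality between two $s$-computable reals is $s$-c.e. With this in hand, Proposition \ref{theoBorelCantelli} yields, for every fixed $k$ and every $1$-$(s,P)$-random point $x$, that $x\in A_n^{(k)}$ for only finitely many $n$. Unpacking the definition, $R(x_1^n)\le 2^{-k}n^2 P(x_1^n)$ eventually, i.e.
\[
  -\log R(x_1^n)+\log P(x_1^n)+2\log n \;\ge\; k
\]
for all sufficiently large $n$. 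Since $k$ was arbitrary, the liminf of the expression on the random point $x$ is infinite, and hence the limit exists and equals $+\infty$.

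The only real subtlety is bookkeeping around the effective content: one must see that the events $A_n^{(k)}$ are uniformly $s$-c.e.\ (which is just the fact that comparison of two $s$-computable reals is semi-decidable with oracle $s$) and notice that there is no need to diagonalise over $k$, because Proposition \ref{theoBorelCantelli} can be applied separately for each $k$ on the same set of $1$-$(s,P)$-random points, and the conclusion ``$\ge k$ eventually, for every $k$'' is precisely the meaning of $\lim = +\infty$.
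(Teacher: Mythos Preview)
Your proof is correct and follows exactly the route indicated in the paper, which merely asserts that the result ``follows by the effective Borel--Cantelli lemma, Proposition~\ref{theoBorelCantelli}'' without spelling out the details. The parametrisation by $k$, the Markov-type bound $P(A_n^{(k)})\le 2^k/n^2$, and the observation that the events are uniformly $s$-c.e.\ because $P$ and $R$ are both $s$-computable are precisely the steps one needs to flesh out that one-line hint.
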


Random variables $P(x_0|X_{-n}^{-1})$ for $n\ge 1$ form a martingale
process. Thus applying the effective martingale convergence
\cite{takahashi2008definition}, we obtain the effective L\'evy law in
particular:
\begin{Proposition}[effective L\'evy law]
  \label{theoLevy}
  For a stationary probability measure $P$, on $1$-$P$-random
  points there exist limits
    \begin{align}
      P(x_0|X_{-\infty}^{-1}):=\lim_{n\to\infty} P(x_0|X_{-n}^{-1}).
    \end{align}
\end{Proposition}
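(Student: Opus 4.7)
The plan is, for each fixed symbol $a\in\mathbb{X}$ and for a representation $s$ of $P$, to identify the sequence
$M_n(x):=P(a\mid X_{-n}^{-1})(x)$, $n\ge 1$, as a uniformly $s$-computable $[0,1]$-valued martingale with respect to the filtration $\mathcal{F}_n:=\sigma(X_{-n}^{-1})$, and then to invoke the effective martingale convergence theorem of Takahashi \cite{takahashi2008definition}, relativised to oracle $s$, to secure pointwise convergence on every $1$-$(s,P)$-random point.

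The martingale property is immediate from the tower property:
\begin{align}
  \mean\kwad{M_{n+1}\mid\mathcal{F}_n}
  =\mean\kwad{P(a\mid X_{-n-1}^{-1})\mid X_{-n}^{-1}}
  =P(a\mid X_{-n}^{-1})
  =M_n.
\end{align}
Uniform $s$-computability follows since each $M_n$ is, at a point $x$, the canonical ratio $P(X_0=a, X_{-n}^{-1}=x_{-n}^{-1})/P(X_{-n}^{-1}=x_{-n}^{-1})$ of cylinder probabilities, and such ratios are uniformly $s$-computable off the $P$-null set where the denominator vanishes. Takahashi's effective martingale convergence theorem therefore yields the existence of $\lim_n M_n(x)$ on every $1$-$(s,P)$-random point $x$.

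Since the alphabet is finite, I would then run this argument for each $a\in\mathbb{X}$. The union of the $D=|\mathbb{X}|$ associated Martin-L\"of $(s,P)$-tests can be merged, by the usual shift of levels, into a single Martin-L\"of $(s,P)$-test, so no $1$-$(s,P)$-random point can fail convergence for any $a$. Consequently, on every $1$-$P$-random $x$ all $D$ limits coexist, and specialising $a$ to $x_0$ gives the claimed limit $P(x_0\mid X_{-\infty}^{-1})$.

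The main obstacle I foresee is purely organisational: making uniform $s$-computability precise within the Reimann-Slaman framework and handling cleanly the conditioning strings on which the denominator is zero. Once this bookkeeping is settled, the effective martingale convergence theorem does all the real work, so---unlike for the Birkhoff ergodic theorem---no new proof needs to be engineered for the effectivisation.
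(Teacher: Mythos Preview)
Your proposal is correct and follows exactly the approach the paper indicates: the paper simply remarks that $P(x_0\mid X_{-n}^{-1})$ is a martingale and invokes Takahashi's effective martingale convergence \cite{takahashi2008definition}, without spelling out the computability bookkeeping or the finite union over $a\in\mathbb{X}$ that you supply. If anything, your write-up is more detailed than the paper's one-line justification.
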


Some celebrated result is the effective Birkhoff ergodic theorem
\cite{vyugin1998ergodic,BienvenuOthers12,FranklinOthers12}. In the
following, $\mean X:=\int X dP$ stands for the expectation of a random
variable $X$ with respect to measure $P$.
\begin{Proposition}[effective Birkhoff ergodic theorem
  \mbox{\cite[Theorem 10]{BienvenuOthers12}}]
    \label{theoBirkhoff}
    For a stationary ergodic probability measure $P$ and an
    $s$-left-c.e.\ real random variable $G$ such that $G\ge 0$ and
    $\mean G<\infty$, on $1$-$(s,P)$-random points we have
  \begin{align}
    \label{Birkhoff}
    \lim_{n\to\infty}\frac{1}{n}\sum_{i=0}^{n-1} G\circ T^i
    =
    \mean G.
  \end{align}
\end{Proposition}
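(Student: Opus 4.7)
My strategy is to reduce the theorem to the bounded computable case by truncation, and then to match a monotone-convergence argument with an effective maximal inequality. First I would exploit the left-c.e.\ assumption on $G$: there is a uniformly $s$-computable sequence of simple cylinder functions $g_1\le g_2\le\ldots$, each depending on only finitely many coordinates and bounded by some constant $M_k$, with $G=\sup_k g_k$ pointwise. For each fixed $k$, the function $g_k$ is $s$-computable and bounded, so the base case of the effective Birkhoff theorem for bounded computable observables (V'yugin's result, proved by wrapping the classical maximal ergodic inequality inside a Martin-L\"of test) delivers
\[
\lim_{n\to\infty}\frac{1}{n}\sum_{i=0}^{n-1} g_k\circ T^i \;=\; \mean g_k
\]
on every $1$-$(s,P)$-random $x$. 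Because the family $\{g_k\}$ is countable and uniformly $s$-computable, the conclusion holds simultaneously for all $k$ on $1$-$(s,P)$-random points.

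Next I would deduce the lower bound $\liminf \ge \mean G$. Since $G\ge g_k$, we have
\[
\liminf_{n\to\infty}\frac{1}{n}\sum_{i=0}^{n-1} G\circ T^i \;\ge\; \lim_{n\to\infty}\frac{1}{n}\sum_{i=0}^{n-1} g_k\circ T^i \;=\; \mean g_k,
\]
and letting $k\to\infty$ with monotone convergence on the right yields $\liminf \ge \mean G$ on $1$-$(s,P)$-random points. This half is clean precisely because $g_k$ approximates $G$ from below.

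The main obstacle is the matching upper bound $\limsup \le \mean G$: the tail $G-g_k$ is unbounded and only upper semi-computable, so V'yugin's base case does not apply to it directly. Here I would mimic the argument of \cite{BienvenuOthers12}. For fixed rational $\varepsilon>0$ and each truncation level $k$, I would consider the $s$-c.e.\ union of cylinders
\[
A_{n,k,\varepsilon} \;=\; \Bigl\{x : \frac{1}{n}\sum_{i=0}^{n-1} G\circ T^i(x) \;>\; \mean g_k + \varepsilon\Bigr\},
\]
which is $s$-c.e.\ because $G$ is $s$-left-c.e. An effective version of the Birkhoff maximal ergodic inequality, applied to $G - \mean g_k$ after controlling its positive part by the $L^1$ deficit $\mean G - \mean g_k$, bounds $P(A_{n,k,\varepsilon})$ by a quantity that becomes summable in $n$ once $k$ is chosen so that $\mean G - \mean g_k < \varepsilon^2$, say. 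The effective Borel-Cantelli lemma (Proposition~\ref{theoBorelCantelli}) then rules out the exceptional set on $1$-$(s,P)$-random points, and sending $\varepsilon\downarrow 0$ through rationals gives $\limsup \le \mean G$.

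The delicate point, and the one I expect to absorb most of the work, is calibrating the maximal inequality so that it remains summable under an $s$-c.e.\ (rather than computable) observable; one has to integrate over the left-c.e.\ approximations while preserving the uniform $s$-c.e.\ structure needed to invoke effective Borel-Cantelli. Once that calibration is in place, combining the two bounds concludes the proof.
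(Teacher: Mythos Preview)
The paper does not prove this proposition; it is quoted verbatim as \cite[Theorem 10]{BienvenuOthers12} and stated without proof, serving only as input to the effective Breiman ergodic theorem that follows. There is therefore no authors' argument to compare your sketch against.

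On the merits of your outline: the lower bound via monotone approximation $g_k\uparrow G$ by bounded $s$-computable cylinder functions, together with V'yugin's base case and monotone convergence, is correct and is essentially how the cited reference handles that half. The upper bound, however, has a gap. The Birkhoff maximal ergodic inequality controls the measure of
\[
\Bigl\{x:\;\sup_{n\ge 1}\tfrac{1}{n}\sum_{i=0}^{n-1}(G-\mean g_k)\circ T^i>\varepsilon\Bigr\},
\]
that is, the measure of the \emph{union} $\bigcup_n A_{n,k,\varepsilon}$, not the \emph{sum} $\sum_n P(A_{n,k,\varepsilon})$. There is no mechanism that makes the individual terms $P(A_{n,k,\varepsilon})$ summable in $n$, so feeding them into the effective Borel--Cantelli lemma does not work as you describe. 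What \cite{BienvenuOthers12} actually does is build a Martin-L\"of test directly: for each $m$ one picks $k_m$ with $\mean G-\mean g_{k_m}\le 2^{-m}\varepsilon$, so that the maximal inequality yields
\[
P\Bigl(\sup_n \tfrac{1}{n}\sum_{i=0}^{n-1}(G-g_{k_m})\circ T^i>\varepsilon\Bigr)\le 2^{-m},
\]
and one verifies that these supremum sets are uniformly $s$-c.e.\ (here the left-c.e.\ hypothesis on $G$ is exactly what is needed). Combining with the already-established convergence for $g_{k_m}$ gives $\limsup\le \mean g_{k_m}+\varepsilon\le \mean G+\varepsilon$ on $1$-$(s,P)$-random points. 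Your high-level plan is right, but the Borel--Cantelli step should be replaced by this direct test construction.
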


The proof of the next proposition is an easy application of
Proposition \ref{theoBirkhoff} and properties of left-c.e.\ functions.
\begin{Proposition}[effective Breiman ergodic theorem
  \cite{steifer2020thesis}]
  \label{theoBreiman}
  For a stationary ergodic probability measure $P$ and uniformly
  $s$-computable real random variables $(G_i)_{i\ge 0}$ such that
  $G_n\ge 0$, $\mean \sup_n G_n<\infty$, and limit $\lim_{n\to\infty} G_n$
  exists $P$-almost surely, on $1$-$(s,P)$-random points we have
  \begin{align}
    \label{Breiman}
    \lim_{n\to\infty}\frac{1}{n}\sum_{i=0}^{n-1} G_i\circ T^i
    =
    \mean\lim_{n\to\infty} G_n.
  \end{align}
\end{Proposition}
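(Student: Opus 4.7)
The plan is to adapt the standard sandwich argument for the classical Breiman ergodic theorem, using the tail envelopes $G^+_N := \sup_{k \ge N} G_k$ and $G^-_N := \inf_{k \ge N} G_k$, and to reduce everything to Proposition \ref{theoBirkhoff}. Both sequences are monotone in $N$ and converge $P$-almost surely to $G_\infty := \lim_k G_k$. Since they are dominated by $H := \sup_k G_k$, whose expectation is finite by hypothesis, the dominated convergence theorem gives $\mean G^+_N \downarrow \mean G_\infty$ and $\mean G^-_N \uparrow \mean G_\infty$.

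First I would observe that $G^+_N$ and $H$ are non-negative, integrable, and $s$-left-c.e., being pointwise increasing limits of uniformly $s$-computable finite maxima. Proposition \ref{theoBirkhoff} then applies directly to each and yields, on $1$-$(s,P)$-random points,
\begin{align*}
  \lim_{n\to\infty} \frac{1}{n} \sum_{i=0}^{n-1} G^+_N \circ T^i &= \mean G^+_N,\\
  \lim_{n\to\infty} \frac{1}{n} \sum_{i=0}^{n-1} H \circ T^i &= \mean H.
\end{align*}
The expected difficulty lies in $G^-_N$, which is only $s$-right-c.e.\ as a pointwise infimum of $s$-computable functions, so Proposition \ref{theoBirkhoff} does not apply to it directly. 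The detour is to notice that $H - G^-_N \ge 0$ is itself $s$-left-c.e., because it equals the pointwise increasing limit in $M$ of $\max_{n \le M} G_n - \min_{N \le k \le M} G_k$. Applying Proposition \ref{theoBirkhoff} to $H - G^-_N$ and subtracting from the Birkhoff limit for $H$ yields, on $1$-$(s,P)$-random points,
\[
  \lim_{n\to\infty} \frac{1}{n} \sum_{i=0}^{n-1} G^-_N \circ T^i = \mean G^-_N.
\]

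Finally, for any fixed $N$ the envelope inequalities $G^-_N \circ T^i \le G_i \circ T^i \le G^+_N \circ T^i$ hold for all $i \ge N$, and the initial $N$ terms contribute only $O(1/n)$ to the Ces\`aro mean since each $G_i(T^i x)$ is finite. Hence on every $1$-$(s,P)$-random point
\[
  \mean G^-_N \le \liminf_{n\to\infty} \frac{1}{n}\sum_{i=0}^{n-1} G_i \circ T^i \le \limsup_{n\to\infty} \frac{1}{n}\sum_{i=0}^{n-1} G_i \circ T^i \le \mean G^+_N
\]
for every $N$. Letting $N \to \infty$ squeezes both bounds to $\mean G_\infty$, which is the claimed conclusion. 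The principal obstacle is the computability asymmetry at $G^-_N$; the dominating envelope $H$ furnished by the hypothesis $\mean \sup_n G_n < \infty$ is precisely what permits the asymmetric-to-symmetric reduction $G^-_N = H - (H - G^-_N)$.
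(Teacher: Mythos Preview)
Your proof is correct and follows the same sandwich strategy as the paper: bound the Ces\`aro mean between the tail supremum and tail infimum, apply the effective Birkhoff theorem to each side, and squeeze. The upper bound via $G_N^+=\sup_{k\ge N}G_k$ is handled identically in both.

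The lower bound is where the two executions differ. The paper does not subtract from the global envelope $H$; instead it truncates, setting $\bar H_k := M - \inf_{t>k}\min\{G_t,M\}\in[0,M]$, applies Proposition~\ref{theoBirkhoff} to this bounded $s$-left-c.e.\ function, and only afterwards sends $M\to\infty$ via monotone convergence. Your route---observing that $H-G_N^-$ is itself $s$-left-c.e.\ and integrable, applying Birkhoff to it and to $H$ separately, then subtracting---is a bit cleaner: it dispenses with the auxiliary truncation level $M$ and the final monotone-convergence passage. The cost is that you need the Birkhoff limit for $H$ explicitly, but since $\mean H<\infty$ is already part of the hypothesis this is free. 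Both arguments exploit the same key point, namely that the hypothesis $\mean\sup_n G_n<\infty$ supplies a dominating function that converts the right-c.e.\ infimum into something left-c.e.
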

\begin{proof}
  Let $H_k:=\sup_{t>k}G_t\ge 0$. Then $G_t\le H_k$ for all $t>k$ and
  consequently,
  \begin{align}
    \limsup_{n\to\infty}\frac{1}{n}\sum_{i=0}^{n-1}G_i\circ T^i\le
    \limsup_{n\to\infty}\frac{1}{n}\sum_{i=0}^{n-1}H_k\circ T^i. 
  \end{align}
  Observe that the supremum $H_k$ of uniformly $s$-computable
  functions $G_{k+1},G_{k+2},\ldots$ is $s$-left-c.e. Indeed, to
  enumerate the left cut of the supremum $H_k(x)$ we simultaneously
  enumerate the left cuts of $G_{k+1}(x),G_{k+2}(x),\ldots$. This is
  possible since every $s$-computable function is also
  $s$-left-c.e. Moreover, we are considering only countably many
  functions, hence we can guarantee that an element of each left cut
  appears in the enumeration at least once.

  Now, since for all $k\ge 0$ random variables $H_k$ are
  $s$-left-c.e.\ then by Theorem \ref{theoBirkhoff} (effective
  Birkhoff ergodic theorem), on $1$-$(s,P)$-random points we have
  \begin{align}
    \limsup_{n\to\infty}\frac{1}{n}\sum_{i=0}^{n-1}G_i\circ T^i\le
    \mean H_k.
  \end{align}
  Since $H_k\ge 0$ and $\mean \sup_k H_k<\infty$ then by the dominated
  convergence,
  \begin{align}
    \inf_{k\ge 0} \mean H_k=
    \lim_{k\to\infty}\mean H_k=
    \mean\lim_{k\to\infty}H_k=
    \mean\lim_{n\to\infty}G_n.   
  \end{align}
  Thus,
  \begin{align}
    \label{BreimanUpper}
    \limsup_{n\to\infty}\frac{1}{n}\sum_{i=0}^{n-1}G_i\circ T^i\le
    \mean\lim_{n\to\infty}G_n.
  \end{align}
  
  For the converse inequality, consider a natural number $M$ and put
  random variables $\bar H_k:=M-\inf_{t>k}\min\klam{G_t,M}\in
  [0,M]$. We observe that $\bar H_k$ are also $s$-left-c.e.\ since
  $G_t$ are uniformly $s$-computable by the hypothesis. By Theorem
  \ref{theoBirkhoff} (effective Birkhoff ergodic theorem), on
  $1$-$(s,P)$-random points we have
  \begin{align}
    M-\liminf_{n\to\infty}\frac{1}{n}\sum_{i=0}^{n-1}\min\klam{G_i,M}\circ T^i\le
    \mean \bar H_k.
  \end{align}
  Since $0\le \bar H_k\le M$ then by the dominated convergence,
  \begin{align}
    \inf_{k\ge 0} \mean \bar H_k=
    \lim_{k\to\infty}\mean \bar H_k=
    \mean\lim_{k\to\infty}\bar H_k=
    M-\mean\min\klam{\lim_{n\to\infty}G_n,M}.   
  \end{align}
  Hence, regrouping the terms we obtain
  \begin{align}
    \liminf_{n\to\infty}\frac{1}{n}\sum_{i=0}^{n-1}G_i\circ T^i
    &\ge
    \liminf_{n\to\infty}\frac{1}{n}\sum_{i=0}^{n-1}\min\klam{G_i,M}\circ
      T^i
      \nonumber\\
    &\ge
    \mean\min\klam{\lim_{n\to\infty}G_n,M}
    \xrightarrow[M\to\infty]{}  \mean\lim_{n\to\infty}G_n,
    \label{BreimanLower}
  \end{align}
  where the last transition follows by the monotone convergence.  By
  (\ref{BreimanUpper}) and (\ref{BreimanLower}) we derive the claim.
\end{proof}
The almost sure versions of Propositions \ref{theoBirkhoff} and
\ref{theoBreiman} concern random variables which need not be
nonnegative \cite{Breiman57}.

An important result for universal prediction is the Azuma inequality
\cite{Azuma67}, whose following corollary will be used Subsections
\ref{secPrediction} and \ref{secInducedII}.
\begin{Theorem}[effective Azuma theorem]
  \label{theoAzuma}
  For a probability measure $P$ and uniformly $s$-computable real
  random variables $(Z_n)_{n\ge 1}$ such that
  $\abs{Z_n}\le \epsilon_n\sqrt{n/\ln n}$ with
  $\lim_{n\to\infty} \epsilon_n=0$, on $1$-$(s,P)$-random points we have
  \begin{align}
    \label{Azuma}
    \lim_{n\to\infty}\frac{1}{n}\sum_{i=1}^{n}
    \kwad{Z_i-\mean\okra{Z_i\middle|X_1^{i-1}}}
    =
    0.
  \end{align}
\end{Theorem}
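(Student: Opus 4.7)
The plan is to set $D_i := Z_i - \mean\okra{Z_i\middle|X_1^{i-1}}$ and recognise $M_n := \sum_{i=1}^n D_i$ as a martingale with increments bounded by $2\epsilon_i\sqrt{i/\ln i}$. To this I apply the classical Azuma inequality \cite{Azuma67} to obtain a superpolynomially small tail bound on $P(\abs{M_n} > \delta n)$, and then invoke the effective Borel--Cantelli lemma (Proposition~\ref{theoBorelCantelli}) to transfer the almost-sure conclusion to every $1$-$(s,P)$-random point.

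Azuma's inequality yields, for each rational $\delta > 0$,
\begin{align*}
P(\abs{M_n} > \delta n) \le 2\exp\!\left(-\frac{\delta^2 n^2}{8\sum_{i=1}^n \epsilon_i^2 \cdot i/\ln i}\right).
\end{align*}
A cut-off argument at an arbitrary $K$, using $\sum_{i=2}^n i/\ln i = O(n^2/\ln n)$ and $\sup_{i>K}\epsilon_i^2 \to 0$ as $K\to\infty$, shows that $\sum_{i=1}^n \epsilon_i^2 \cdot i/\ln i = o(n^2/\ln n)$. Hence the exponent grows faster than any constant multiple of $\ln n$, and the series $\sum_{n\ge 1} P(\abs{M_n}>\delta n)$ converges for every rational $\delta > 0$.

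It remains to check that the events $U_n^\delta := \klam{x : \abs{M_n(x)} > \delta n}$ are uniformly $s$-c.e. The $Z_i$ are uniformly $s$-computable by hypothesis, and each conditional expectation $\mean\okra{Z_i \middle| X_1^{i-1}}$ can be written as a ratio of integrals of a bounded $s$-computable function against the $s$-computable cylinder probabilities supplied by the representation $s$ of $P$; since the integrand is uniformly bounded, these integrals are $s$-computable uniformly in $i$ and in the conditioning string. Consequently $M_n$ is uniformly $s$-computable, the events $U_n^\delta$ are uniformly $s$-c.e., and Proposition~\ref{theoBorelCantelli} gives $\abs{M_n(x)}/n \le \delta$ for all sufficiently large $n$ on every $1$-$(s,P)$-random point $x$. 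Since this holds simultaneously for all rational $\delta > 0$ on the same random point, (\ref{Azuma}) follows. The main delicate step, where I anticipate the technical care is needed, is the uniform $s$-computability of the conditional expectations $\mean\okra{Z_i \middle| X_1^{i-1}}$; without this the tail events cannot be enumerated, and its verification rests on the boundedness of $Z_i$ together with the $s$-computability of $P$ through its representation $s$.
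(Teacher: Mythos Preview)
Your proof is correct and follows essentially the same route as the paper: define the martingale of centered increments, bound the increments by $2\epsilon_i\sqrt{i/\ln i}$, apply the Azuma inequality to get a summable tail bound on $P(\abs{M_n}>\delta n)$, and invoke the effective Borel--Cantelli lemma (Proposition~\ref{theoBorelCantelli}). You are somewhat more explicit than the paper about the $o(n^2/\ln n)$ estimate, the restriction to rational $\delta$, and the uniform $s$-computability of the conditional expectations, but the structure is identical.
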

\begin{proof}
  Define
  \begin{align}
    Y_n:=\sum_{i=1}^{n}
    \kwad{Z_i-\mean\okra{Z_i\middle|X_1^{i-1}}}.
  \end{align}
  Process $(Y_n)_{n\ge 1}$ is a martingale with respect to process
  $(X_n)_{n\ge 1}$ with increments bounded by inequality
  \begin{align}
    \abs{Z_n-\mean\okra{Z_n\middle|X_1^{n-1}}}\le c_n
    := 2\epsilon_n\sqrt{n/\ln n}.
  \end{align}
  By the Azuma inequality \cite{Azuma67} for any $\epsilon>0$ we
  obtain
  \begin{align}
    P(\abs{Y_n}\ge n\epsilon)
    &\le
    2\exp\okra{-\frac{\epsilon^2n^2}{2\sum_{i=1}^{n} c_i^2}}
    =
      n^{-\alpha_n}
      ,
  \end{align}
  where
  \begin{align}
    \alpha_n&:=\frac{\epsilon^2n}{8\sum_{i=1}^{n} \epsilon_i^2}.
  \end{align}
  Since $\alpha_n\to\infty$, we have
  $\sum_{n=1}^\infty P(\abs{Y_n}\ge n\epsilon)<\infty$ and by
  Proposition \ref{theoBorelCantelli} (effective Borel-Cantelli
  lemma), we obtain (\ref{Azuma}) on $1$-$(s,P)$-random points.
\end{proof}

\section{Main results}
\label{secMain}

This section contains results concerning effective universal coding
and prediction, predictors induced by universal measures and some
examples of universal measures and universal predictors.

\subsection{Universal coding}
\label{secCoding}

Let us begin our considerations with the problem of universal
measures, which is related to the problem of universal coding.
Suppose that we want to compress losslessly a typical sequence
generated by a stationary probability measure $P$. We can reasonably
ask what is the lower limit of such a compression, i.e., what is the
miminal ratio of the encoded string length divided by the original
string length. In information theory, it is well known that the
greatest lower bound of such ratios is given by the entropy rate of
measure $P$.  For a stationary probability measure $P$, we denote its
entropy rate as
\begin{align}
  h_{P}
  &:=\lim_{n\to\infty}\frac{1}{n}\mean\kwad{-\log P(X_1^n)}
  =\lim_{k\to\infty}\mean\kwad{-\log P(X_{k+1}|X_1^k)}
  .
\end{align}
The entropy rate has the interpretation of the minimal asymptotic rate
of lossless encoding of sequences emitted by measure $P$ in various
senses: in expectation, almost surely, or on algorithmically random
points, where the last interpretation will be pursued in this
subsection.

To furnish some theoretical background for universal coding let us
recall the Kraft inequality $\sum_{w\in A} 2^{-\abs{w}}\le 1$, which
holds for any prefix-free subset of strings
$A\subset\klam{0,1}^*$. The Kraft inequality implies in particular
that lossless compresion procedures, called uniquely decodable codes,
can be mapped one-to-one to semi-measures. In particular, if we are
seeking for a universal code, i.e., a uniquely decodable code
$w\mapsto C(w)\in\klam{0,1}^*$ which is optimal for some class of
stochastic sources $P$, we can equivalently seek for a universal
semi-measure of form $w\mapsto R(w):=2^{-\abs{C(w)}}$. Consequently,
the problem of universal coding would be solved if we point out such a
semi-measure $R$ that
\begin{align}
  \lim_{n\to\infty}\frac{1}{n}\kwad{-\log R(X_1^n)}=
   \lim_{n\to\infty}\frac{1}{n}\abs{C(X_1^n)}=h_{P}
\end{align}
for some points that are typical of $P$.

As it is well established in information theory, some initial insight
into the problem of universal coding or universal measures is given by
the Shannon-McMillan-Breiman (SMB) theorem, which states that function
$\frac{1}{n}\kwad{-\log P(X_1^n)}$ tends $P$-almost surely to the
entropy rate $h_{P}$. The classical proofs of this result were given
by Algoet and Cover \cite{AlgoetCover88} and Chung \cite{Chung61}.
An effective version of the SMB theorem was presented by Hochman
\cite{hochman2009upcrossing} and Hoyrup \cite{hoyrup2011dimension}.
\begin{Theorem}[effective SMB theorem
  \cite{hochman2009upcrossing,hoyrup2011dimension}]
  \label{theoSMB}
  For a stationary ergodic probability measure $P$, on
  $1$-$P$-random points we have
 \begin{align}
    \label{SMB}
   \lim_{n\to\infty}\frac{1}{n}\kwad{-\log P(X_1^n)}=h_{P}.
 \end{align}
\end{Theorem}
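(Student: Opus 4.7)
The plan is to reduce Theorem~\ref{theoSMB} to the effective Breiman ergodic theorem (Proposition~\ref{theoBreiman}) applied to a sequence of conditional self-informations, following the Algoet--Cover sandwich argument recast in the effective setting. Fix a representation $s$ of $P$ and set, for $i\ge 0$,
\[
  G_i(x) := -\log P(x_1|x_{1-i}^0),
\]
with the convention $G_0(x)=-\log P(x_1)$ coming from $x_1^0=\lambda$. Then $(G_i)$ is uniformly $s$-computable because $P$ is, and the chain rule for conditional probabilities gives the key pointwise identity
\[
  \sum_{i=0}^{n-1} G_i\circ T^i = -\log P(X_1^n),
\]
so it suffices to evaluate the Cesàro average of $G_i\circ T^i$ on $1$-$P$-random points.

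To apply Proposition~\ref{theoBreiman} I would next verify its remaining hypotheses. Nonnegativity is obvious. Pointwise convergence $G_i\to G_\infty:=-\log P(X_1|X_{-\infty}^0)$ on $1$-$(s,P)$-random points follows from the effective Lévy law (Proposition~\ref{theoLevy}) after a one-step coordinate shift. The identification $\mean G_\infty = h_P$ follows from the stationarity reindexing
\[
  \mean\kwad{-\log P(X_{k+1}|X_1^k)} = \mean\kwad{-\log P(X_1|X_{1-k}^0)}
\]
and dominated convergence, provided the integrability below is established.

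The main obstacle is the remaining hypothesis $\mean\sup_i G_i<\infty$. I plan to obtain it via the classical submartingale maximal inequality: the variables $Y_i:=1/P(X_1|X_{1-i}^0)$ satisfy $\mean\kwad{Y_i\mid X_{1-i}^0}=D$, where $D=\abs{\mathbb{X}}$, and form a nonnegative submartingale in $\okra{\sigma(X_{1-i}^0)}_{i\ge 0}$ with constant mean $D$. Doob's maximal inequality then gives $P\okra{\sup_{i\le N} Y_i>t}\le D/t$ for every $N$, and passing $N\to\infty$ yields $P\okra{\sup_i G_i>\lambda}\le D\cdot 2^{-\lambda}$; integrating this tail bound produces both $\mean\sup_i G_i<\infty$ and the integrable dominating function needed in the previous paragraph.

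With all hypotheses in place, Proposition~\ref{theoBreiman} delivers on every $1$-$(s,P)$-random point
\[
  \frac{1}{n}\kwad{-\log P(X_1^n)} = \frac{1}{n}\sum_{i=0}^{n-1} G_i\circ T^i \xrightarrow[n\to\infty]{} \mean G_\infty = h_P,
\]
and since every $1$-$P$-random point is $1$-$(s,P)$-random for some representation $s$, this gives the conclusion on all $1$-$P$-random points. The whole argument thus reduces to Propositions~\ref{theoLevy} and~\ref{theoBreiman} together with the maximal-inequality estimate, which is the only substantive classical input.
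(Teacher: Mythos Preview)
Your approach is essentially the same as the paper's: decompose $-\log P(X_1^n)$ via the chain rule, verify $\mean\sup_i G_i<\infty$, and apply the effective L\'evy law together with the effective Breiman ergodic theorem; the paper simply cites \cite[Lemma~4.26]{Smorodinsky71} for the maximal bound instead of deriving it. One small slip: your $Y_i=1/P(X_1\mid X_{1-i}^0)$ are not $\sigma(X_{1-i}^0)$-measurable, so they cannot be a submartingale for that filtration; if you enlarge to $\mathcal{G}_i=\sigma(X_{1-i}^1)$ then $(Y_i)$ is adapted and in fact a \emph{martingale} (a short computation gives $\mean[Y_{i+1}\mid\mathcal{G}_i]=Y_i$), so Doob's inequality yields the tail bound and the rest of your argument goes through unchanged.
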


The essential idea of Hoyrup's proof, which is a bit more complicated,
can be retold using tools developed in Subsection
\ref{secKnown}. Observe first that we have
\begin{align}
  \frac{1}{n}\kwad{-\log P(X_1^n)}=
  \frac{1}{n}\sum_{i=1}^n\kwad{-\log P(X_i|X_{1}^{i-1.})}.
\end{align}
Moreover, we have the uniform bound
\begin{align}
  \mean \sup_{n\ge 0}\kwad{-\log P(X_0|X_{-n}^{-1})}
  \le \mean \kwad{-\log P(X_0)}+\log e
  \le \log e D
  <\infty,
\end{align}
see \cite[Lemma 4.26]{Smorodinsky71}---invoked by Hoyrup as well.
Consequently, the effective SMB theorem follows by Proposition
\ref{theoBreiman} (effective Breiman ergodic theorem) and Proposition
\ref{theoLevy} (effective L\'evy law). In contrast, the reasoning by
Hoyrup was more casuistic---he did not mention the Breimain ergodic
theorem and invoked some more specific statements.

We note in passing that it could be also interesting to check whether
one can effectivize the textbook sandwich proof of the SMB theorem by
Algoet and Cover \cite{AlgoetCover88} using the decomposition of
conditionally algorithmically random sequences by Takahashi
\cite{takahashi2008definition}. However, this step would require some
novel theoretical considerations about conditional algorithmic
randomness for uncomputable measures. We mention this only to point
out a possible direction for future research.

As a direct consequence of the effective SMB theorem and Proposition
\ref{theoBarron} (effective Barron lemma), we obtain this
effectivization of another well-known almost sure statement:
\begin{Theorem}[effective source coding]
  \label{theoCode}
  For any stationary ergodic measure $P$ and any $s$-computable
  probability measure $R$, on $1$-$(s,P)$-random points we have
\begin{align}
  \label{Code}
  \liminf_{n\to\infty}\frac{1}{n}\kwad{-\log R(X_1^n)}
  \ge
  h_{P}
  .
\end{align}  
\end{Theorem}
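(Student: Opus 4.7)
The plan is to obtain the bound by combining the effective Barron lemma (Proposition \ref{theoBarron}) with the effective SMB theorem (Theorem \ref{theoSMB}), both of which apply on $1$-$(s,P)$-random points. Note that any $1$-$(s,P)$-random point is $1$-$P$-random by the definition in Subsection \ref{secRandomness}, since $s$ is a representation of $P$, so the effective SMB theorem is indeed available on such points.

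First, I would fix a $1$-$(s,P)$-random point $x$ and invoke the effective Barron lemma, which yields
\begin{align}
    -\log R(X_1^n) + \log P(X_1^n) + 2\log n \longrightarrow \infty.
\end{align}
In particular, for all sufficiently large $n$,
\begin{align}
    -\log R(X_1^n) \ge -\log P(X_1^n) - 2\log n.
\end{align}

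Next, I would divide by $n$ and pass to $\liminf$, using $\lim_{n\to\infty} (2\log n)/n = 0$, to obtain
\begin{align}
    \liminf_{n\to\infty}\frac{1}{n}\kwad{-\log R(X_1^n)}
    \ge \liminf_{n\to\infty}\frac{1}{n}\kwad{-\log P(X_1^n)}.
\end{align}
Finally, the effective SMB theorem identifies the right-hand side as $h_P$, giving (\ref{Code}).

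There is no real obstacle here; the argument is just a two-line combination of previously cited effectivizations. The only point worth a brief remark is the randomness notion matching: the hypothesis provides $1$-$(s,P)$-randomness of $x$, which is exactly what the effective Barron lemma requires, and it is automatically sufficient to invoke the effective SMB theorem since a $1$-$(s,P)$-random point is $1$-$P$-random for the representation $s$ of $P$.
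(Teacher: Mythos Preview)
Your proof is correct and matches the paper's own approach exactly: the paper states the theorem as ``a direct consequence of the effective SMB theorem and Proposition \ref{theoBarron} (effective Barron lemma)'' without writing out further details, and your argument is precisely the natural two-line derivation from those two ingredients.
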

In the almost sure setting, relationship (\ref{Code}) holds $P$-almost
surely for any stationary ergodic measure $P$ and any (not necessarily
computable) probability measure $R$.

Now we can define universal measures.
\begin{Definition}[universal measure]
  \label{defiUniversalMeasure}
  A computable (not necessarily stationary) probability measure $R$
  is called (weakly) $n$-universal if for any stationary ergodic
  probability measure $P$, on (weakly) $n$-$P$-random points we have
  \begin{align}
    \label{UniversalCode}
    \lim_{n\to\infty}\frac{1}{n}\kwad{-\log R(X_1^n)}= h_{P}
    .
  \end{align}
\end{Definition}
In the almost sure setting, we say that a probability measure $R$ is
almost surely universal if (\ref{UniversalCode}) holds $P$-almost
surely for any stationary ergodic probability measure $P$.  By
Proposition \ref{theoEffectivization}, there are only two practically
interesting cases of computable universal measures: weakly
$2$-universal ones and $1$-universal ones, since every computable
almost surely universal probability measure is automatically weakly
$2$-universal. We stress that we impose computability of (weakly)
$n$-universal measures by definition since it simplifies statements of
some theorems. This should be contrasted with universal prediction \`a
la Solomonoff for left-c.e.\ semimeasures where the universal element
belongs to the class and is not computable \cite{Solomonoff64}.

Computable almost surely universal measures exist if the alphabet
$\mathbb{X}$ is finite.  An important example of an almost surely
universal and, as we will see in Section \ref{secPPM}, also
$1$-universal measure is the Prediction by Partial Matching (PPM)
measure \cite{ClearyWitten84,Ryabko88en2,Ryabko08}.  As we have
mentioned, universal measures are closely related to the problem of
universal coding (data compression) and more examples of universal
measures can be constructed from universal codes, for instance given
in \cite{ZivLempel77,KiefferYang00,CharikarOthers05,Debowski11b},
using the normalization by Ryabko \cite{Ryabko09}. This normalization
is not completely straightforward, since we need to forge
semi-measures into probability measures.

\subsection{Universal prediction}
\label{secPrediction}

Universal prediction is a problem similar to universal coding. In this
problem, we also seek for a single procedure that would be optimal
within a class of probabilistic sources but we apply a different loss
function, namely, we impose the error rate being the density of
incorrect guesses of the next output given previous ones. In spite of
this difference, we will try to state the problem of universal
prediction analogously to universal coding.  A predictor is an
arbitrary total function $f:\mathbb{X}^*\rightarrow\mathbb{X}$. The
predictor induced by a probability measure $P$ will be defined as
\begin{align}
  \label{MetaPredictor}
  f_{P}(x_1^n):=\argmax_{x_{n+1}\in\mathbb{X}} P(x_{n+1}|x_1^n),
\end{align}
where
$\argmax_{x\in \mathbb{X}} g(x):=\min\klam{a\in \mathbb{X}: g(a)\ge
  g(x) \text{ for all $x\in\mathbb{X}$}}$ for the total order
$a_1<....<a_D$ on $\mathbb{X}=\klam{a_1,..,a_D}$.  Moreover, for a
stationary measure $P$, we define the unpredictability rate
\begin{align}
  u_{P}:=\lim_{n\to\infty}\mean\kwad{1-\max_{x_0\in\mathbb{X}} P(x_0|X_{-n}^{-1})}
  .
\end{align}

It is natural to ask whether the unpredictability rate can be related
to entropy rate. Using the Fano inequality \cite{Fano61}, a classical
result of information theory, and its converse \cite{Debowski21}, both
independently brought to algorithmic randomness theory by Fortnow and
Lutz \cite{fortnow2005prediction}, yields this bound:
\begin{Theorem}
  For a stationary measure $P$ over a $D$-element alphabet,
  \begin{align}
    \frac{D}{D-1}\eta\okra{\frac{1}{D}} u_P
    \le
    h_P
    \le
    \eta(u_P)+u_P\log(D-1)
    ,
  \end{align}
  where $\eta(p):=-p\log p-(1-p)\log(1-p)$.
\end{Theorem}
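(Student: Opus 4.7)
The plan is to apply the pointwise Fano inequality and its converse to the conditional distribution of $X_0$ given $X_{-n}^{-1}$, then take expectation and pass to the limit $n\to\infty$. Let
\begin{align*}
e_n(x_{-n}^{-1}) &:= 1-\max_{x_0\in\mathbb{X}} P(x_0|x_{-n}^{-1}),\\
H_n(x_{-n}^{-1}) &:= -\sum_{x_0\in\mathbb{X}} P(x_0|x_{-n}^{-1})\log P(x_0|x_{-n}^{-1})
\end{align*}
denote the pointwise Bayes error and the pointwise conditional entropy, so that by stationarity and the very definitions of the two rates, $u_P=\lim_{n\to\infty}\mean e_n$ and $h_P=\lim_{n\to\infty}\mean H_n$. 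It therefore suffices to bound $\mean H_n$ two-sidedly in terms of $\mean e_n$ and then let $n\to\infty$ using continuity of $\eta$.

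For the upper bound, the classical Fano inequality applied pointwise to the conditional law of $X_0$ on a $D$-symbol alphabet yields $H_n\le\eta(e_n)+e_n\log(D-1)$. Taking expectation and applying Jensen's inequality to the concave function $\eta$ gives $\mean H_n\le\eta(\mean e_n)+(\mean e_n)\log(D-1)$, which in the limit produces $h_P\le\eta(u_P)+u_P\log(D-1)$.

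For the lower bound, the converse Fano inequality (as invoked in \cite{Debowski21}) provides the pointwise estimate $H_n\ge\eta(e_n)$: when one minimizes entropy over distributions on $\mathbb{X}$ with a prescribed maximum atom $1-P_e$, for $P_e\le 1/2$ the minimum is attained by concentrating the remaining mass $P_e$ on a single other symbol and equals $\eta(P_e)$, while for $P_e\in(1/2,1-1/D]$ the constrained minimum is strictly larger. Since $e_n$ takes values in $[0,1-1/D]$ and $\eta$ is concave with $\eta(0)=0$ and $\eta(1-1/D)=\eta(1/D)$, a concave function lies above its chord through the endpoints, yielding the linear lower bound $\eta(x)\ge\frac{D}{D-1}\eta(1/D)\,x$ on $[0,1-1/D]$. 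Hence $H_n\ge\frac{D}{D-1}\eta(1/D)\,e_n$, and taking expectation followed by the $n\to\infty$ limit delivers $\frac{D}{D-1}\eta(1/D)\,u_P\le h_P$. The only nonroutine input here is the converse Fano bound, which I would simply cite; the remaining steps are a standard Jensen-plus-concavity argument and continuity of $\eta$.
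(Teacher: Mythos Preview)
Your proof is correct and follows precisely the approach the paper indicates: the paper does not give a detailed proof of this theorem but simply states that it follows from the Fano inequality \cite{Fano61} and its converse \cite{Debowski21}, which is exactly the pair of pointwise bounds you apply to the conditional law of $X_0$ given $X_{-n}^{-1}$ before taking expectations, using Jensen for the concave $\eta$, and passing to the limit. Your write-up thus supplies the details the paper leaves implicit, including the clean justification of $H_n\ge\eta(e_n)$ via the decomposition $-\sum_{i\ge 2}p_i\log p_i = P_e\log(1/P_e)+P_e\,H(p_i/P_e)$ and the chord-below-concave-graph argument on $[0,1-1/D]$ that yields the linear constant $\frac{D}{D-1}\eta(1/D)$.
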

\noindent
Moreover, Fortnow and Lutz \cite{fortnow2005prediction} found out some
stronger inequalities, sandwich-bounding the unpredictability of an
arbitrary sequence in terms of its effective dimension.  The effective
dimension turns out to be a generalization of the entropy rate to
arbitrary sequences \cite{hoyrup2011dimension}, which are not
necessarily random with respect to stationary ergodic measures.

In the less general framework of stationary ergodic measures, using
the Azuma theorem, we can show that no predictor can beat the induced
predictor and the error rate committed by the latter equals the
unpredictability rate $u_{P}$.  The following proposition concerning
the error rates effectivizes the well-known almost sure proposition
(the proof in the almost sure setting is available in
\cite{Algoet94}).
\begin{Theorem}[effective source prediction]
  \label{theoPredictor}
  For any stationary ergodic measure $P$ and any $s$-computable
  predictor $f$, on $1$-$(s,P)$-random points we have
\begin{align}
  \liminf_{n\to\infty}\frac{1}{n}\sum_{i=0}^{n-1}\boole{X_{i+1}\neq f(X_1^i)}
  &\ge
  u_{P}
  .
  \label{Predictor}
\end{align}
Moreover, if the induced predictor $f_{P}$ is $s$-computable then
(\ref{Predictor}) holds with the equality for $f=f_{P}$.
\end{Theorem}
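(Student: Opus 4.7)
The plan is to combine the effective Azuma theorem (Theorem \ref{theoAzuma}), the effective Breiman ergodic theorem (Proposition \ref{theoBreiman}), and the effective L\'evy law (Proposition \ref{theoLevy}). I would first set $Z_i:=\boole{X_i\neq f(X_1^{i-1})}$ for $i\ge 1$, which are uniformly $s$-computable $\klam{0,1}$-valued random variables since $f$ is $s$-computable. The trivial bound $\abs{Z_i}\le 1=\sqrt{\ln i/i}\cdot\sqrt{i/\ln i}$ verifies the hypothesis of Theorem \ref{theoAzuma} with $\epsilon_i:=\sqrt{\ln i/i}\to 0$, so on $1$-$(s,P)$-random points
\begin{align*}
  \lim_{n\to\infty}\frac{1}{n}\sum_{i=1}^{n}\kwad{Z_i-\mean\okra{Z_i\middle|X_1^{i-1}}}=0.
\end{align*}
A direct calculation gives
\begin{align*}
  \mean\okra{Z_{i+1}\middle|X_1^{i}}=1-P(f(X_1^i)|X_1^i)\ge 1-\max_{y\in\mathbb{X}}P(X_{i+1}=y|X_1^i),
\end{align*}
with equality whenever $f=f_P$.

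Next I would recast the lower bound in shift-invariant form by defining
\begin{align*}
  G_i(x):=1-\max_{y\in\mathbb{X}}P(X_1=y|X_{-i+1}^{0}=x_{-i+1}^{0}),\qquad i\ge 0,
\end{align*}
so that by stationarity $\mean\okra{Z_{i+1}\middle|X_1^{i}}\ge G_i\circ T^i$. Applying the effective L\'evy law to each of the finitely many letters $y\in\mathbb{X}$ shows that $G_\infty:=1-\max_{y\in\mathbb{X}}P(X_1=y|X_{-\infty}^0)=\lim_{i\to\infty}G_i$ exists on $1$-$P$-random points, and the uniform bound $0\le G_i\le 1$ supplies the domination hypothesis of Proposition \ref{theoBreiman}. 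The effective Breiman ergodic theorem then yields
\begin{align*}
  \lim_{n\to\infty}\frac{1}{n}\sum_{i=0}^{n-1}G_i\circ T^i=\mean G_\infty
\end{align*}
on $1$-$(s,P)$-random points. A stationarity shift together with bounded convergence identifies $\mean G_\infty$ with the unpredictability rate $u_{P}$.

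Stringing the two displays together on $1$-$(s,P)$-random points gives
\begin{align*}
  \liminf_{n\to\infty}\frac{1}{n}\sum_{i=0}^{n-1}\boole{X_{i+1}\neq f(X_1^i)}=\liminf_{n\to\infty}\frac{1}{n}\sum_{i=0}^{n-1}\mean\okra{Z_{i+1}\middle|X_1^{i}}\ge u_P,
\end{align*}
which is (\ref{Predictor}). For the induced predictor $f=f_P$, the inequality $\mean\okra{Z_{i+1}\middle|X_1^i}\ge G_i\circ T^i$ becomes an equality, so the $\liminf$ is in fact a limit equal to $u_{P}$. The main technical wrinkle I expect is justifying that the conditional probabilities defining $G_i$ qualify as uniformly $s$-computable random variables in the precise sense required by Proposition \ref{theoBreiman}: they are ratios of joint probabilities well defined only on the full-measure set where the conditioning event has positive probability. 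This is a standard issue once $P$ is accessed through its representation $s$, and finiteness of $\mathbb{X}$ keeps the passage through the maximum harmless.
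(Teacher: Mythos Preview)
Your proposal is correct and follows essentially the same route as the paper's proof: apply the effective Azuma theorem to replace the indicator sum by the sum of conditional error probabilities, bound each term below by $1-\max_y P(y|X_1^i)$ (with equality for $f=f_P$), and then use the effective L\'evy law together with the effective Breiman ergodic theorem to identify the Ces\`aro limit of these bounds with $u_P$. The paper organizes the three ingredients in the same order and with the same logical role; your added remark about the $s$-computability of the $G_i$ (ratios of cylinder probabilities) is a point the paper leaves implicit.
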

\begin{proof}
  Let measure $P$ be stationary ergodic.  In view of Theorem
  \ref{theoAzuma} (effective Azuma theorem), for any $s$-computable
  predictor $f$, on $1$-$(s,P)$-random points we have
  \begin{align}
    \label{AzumaPredictor}
    \lim_{n\to\infty}\frac{1}{n}\sum_{i=0}^{n-1}
    \kwad{\boole{X_{i+1}\neq f(X_1^i)}-P(X_{i+1}\neq f(X_1^i)|X_1^i)}=0.
  \end{align}
 Moreover, we have
 \begin{align}
   \label{ErrorProb}
  P(X_{i+1}\neq f(X_1^i)|X_1^i)\ge
  1-\max_{x_{i+1}\in\mathbb{X}}P(x_{i+1}|X_1^i)
\end{align}
Subsequently, we observe that limits
$\lim_{n\to\infty} P(x_0|X_{-n}^{-1})$ exist on $1$-$(s,P)$-random
points by Proposition \ref{theoLevy} (effective L\'evy law). Thus by
Proposition \ref{theoBreiman} (effective Breiman ergodic theorem) and
the dominated convergence, on $1$-$(s,P)$-random points we obtain
\begin{align}
  \lim_{n\to\infty}\frac{1}{n}\sum_{i=0}^{n-1}
    \kwad{1-\max_{x_{i+1}\in\mathbb{X}}P(x_{i+1}|X_1^i)}
  &=
    \mean\lim_{n\to\infty}\kwad{1-\max_{x_0\in\mathbb{X}}
    P(x_0|X_{-n}^{-1})}
    \nonumber\\
  &=
    u_{P}.
    \label{BreimanPredictor}
\end{align}
Hence inequality (\ref{Predictor}) follows by (\ref{AzumaPredictor}),
(\ref{ErrorProb}) and (\ref{BreimanPredictor}). Similarly, the
equality in (\ref{Predictor}) for $f=f_{P}$ follows by noticing that
inequality (\ref{ErrorProb}) turns out to be the equality in this
case.
\end{proof}
In the almost sure setting, relationship (\ref{Predictor}) holds
$P$-almost surely for any stationary ergodic measure $P$ and any (not
necessarily computable) predictor $f$.

We can see that there can be some problem in the effectivization of
relationship (\ref{Predictor}) caused by the induced predictor $f_{P}$
possibly not being $s$-computable for certain representations $s$ of
measure $P$---since sometimes testing the equality of two real numbers
cannot be done in a finite time. However, probabilities
$P(X_{i+1}\neq f_{P}(X_1^i)|X_1^i)$ are always $s$-computable. Thus,
we can try to define universal predictors in the following way.
\begin{Definition}[universal predictor]
  \label{defiUniversalPredictor}
  A computable predictor $f$ is called (weakly) $n$-universal if
  for any stationary ergodic probability measure $P$, on (weakly)
  $n$-$P$-random points we have
\begin{align}
  \label{UniversalPredictor}
  \lim_{n\to\infty}\frac{1}{n}\sum_{i=0}^{n-1}\boole{X_{i+1}\neq f(X_1^i)}
  =
  u_{P}.
\end{align}  
\end{Definition}
In the almost sure setting, we say that a predictor $f$ is almost
surely universal if (\ref{UniversalPredictor}) holds $P$-almost surely
for any stationary ergodic probability measure $P$.  Almost surely
universal predictors exist if the alphabet $\mathbb{X}$ is finite
\cite{Bailey76,Ornstein78,Algoet94,GyorfiLugosiMorvai99,GyorfiLugosi01}.
In \cite{steifer2020thesis} it was proved that the almost sure
predictor by \cite{GyorfiLugosiMorvai99} is also $1$-universal.

\subsection{Predictors induced by backward estimators}
\label{secInduced}

The almost surely universal predictors by
\cite{Bailey76,Ornstein78,Algoet94,GyorfiLugosiMorvai99,GyorfiLugosi01}
were constructed without a reference to universal
measures. Nevertheless, these constructions are all based on
estimation of conditional probabilities. For a stationary ergodic
process one can consider two separate problems: backward and forward
estimation. The first problem is naturally connected to prediction. We
want to estimate the conditional probability of $(n+1)$-th bit given the
first $n$ bits. Is it possible that, as we increase $n$, our estimates
converge to the true value at some point? To be precise, we ask
whether there exists a probability measure $R$ such that for every
stationary ergodic measure $P$ we have $P$-almost surely
\begin{align}
  \lim_{n\to\infty}\sum_{x_{n+1}\in\mathbb{X}}\abs{R(x_{n+1}|X_{1}^{n-1})-P(x_{n+1}|X_{1}^{n-1})}=0.
\end{align}
It was shown by Bailey \cite{Bailey76} that this is not possible. As
we are about to see, we can get something a bit weaker, namely, the
convergence in Cesaro averages. But to get there, it will be helpful
to consider a bit different problem.

Suppose again that we want to estimate a conditional probability but
the bit that we are interested in is fixed and we are looking more and
more into the past.  In this scenario, we want to estimate the
conditional probability $P(x_0|X_{-\infty}^{-1})$ and we ask whether
increasing the knowledge of the past can help us achieve the perfect
guess. Precisely, we ask if there exists a probability measure $R$
such that for every stationary ergodic measure $P$ we have 
$P$-almost surely
\begin{align}
  \lim_{n\to\infty}
  \sum_{x_0\in\mathbb{X}}\abs{R(x_0|X^{-1}_{-n})-P(x_0|X_{-\infty}^{-1})}
  =0.
\end{align}
It was famously shown by Ornstein that such estimators
exist. (Ornstein proved this for binary-valued processes but the
technique can be generalized to finite-valued processes.)
\begin{Theorem}[Ornstein theorem \cite{Ornstein78}]
  \label{theoOrnstein}
  Let the alphabet be finite.  There exists a computable measure $R$
  such that for every stationary ergodic measure $P$ we have
  $P$-almost surely that
  \begin{align}
    \label{Ornstein}
    \lim_{n\to\infty}
    \sum_{x_0\in\mathbb{X}}\abs{R(x_0|X^{-1}_{-n})-P(x_0|X_{-\infty}^{-1})}
    =0.
  \end{align}
\end{Theorem}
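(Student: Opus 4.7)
The plan is to construct $R$ as a computable Bayesian mixture of finite-order Markov estimators. For each $k\ge 0$, let $R_k$ be a computable probability measure on $\mathbb{X}^{\mathbb{Z}}$ whose conditional probabilities implement a $k$-th order Markov model with Krichevsky--Trofimov (or Laplace) smoothed transitions, and set $R := \sum_{k\ge 0} w_k R_k$ for a strictly positive computable weight sequence $w_k$ summing to one. Computability of $R$ is inherited from computability of the $R_k$ and the $w_k$.

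Two convergence facts drive the argument. First, by Proposition \ref{theoLevy} (the effective L\'evy law), for any stationary $P$ one has $P(x_0|X_{-k}^{-1}) \to P(x_0|X_{-\infty}^{-1})$ $P$-almost surely as $k\to\infty$. Second, for each fixed $k$, by Proposition \ref{theoBirkhoff} applied to $(k{+}1)$-block indicator functions, the empirical $(k{+}1)$-block frequencies computed from $X_{-n}^{-1}$ converge $P$-almost surely to the true $(k{+}1)$-block probabilities; consequently $R_k(x_0|X_{-n}^{-1})\to P(x_0|X_{-k}^{-1})$ $P$-almost surely as $n\to\infty$.

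To combine the two limits, write the predictive distribution of the mixture as
\begin{align*}
R(x_0|X_{-n}^{-1}) = \sum_{k\ge 0} \pi_n(k)\, R_k(x_0|X_{-n}^{-1}), \qquad \pi_n(k) := \frac{w_k R_k(X_{-n}^{-1})}{\sum_j w_j R_j(X_{-n}^{-1})},
\end{align*}
and argue that the posterior $\pi_n$ effectively selects orders $k(n)\to\infty$ slowly enough that $R_{k(n)}(x_0|X_{-n}^{-1})$ is simultaneously close to $P(x_0|X_{-k(n)}^{-1})$ and to $P(x_0|X_{-\infty}^{-1})$. The Shannon--McMillan--Breiman theorem (Theorem \ref{theoSMB}) together with the standard redundancy bound for $k$-th order Markov KT estimators can be used to calibrate $\pi_n$ via lower bounds on $R_k(X_{-n}^{-1})$.

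The main obstacle is precisely this combining step. Bailey's negative theorem for forward estimation shows that any naive averaging is fragile, so the argument has to exploit in an essential way the time-asymmetry of the backward setup, namely that as $n$ grows the past window $X_{-n}^{-1}$ accumulates ever more block statistics while the predicted symbol $X_0$ stays fixed. Ornstein's original construction handles this via an intricate iterative partitioning of the past; alternative modern treatments (e.g.\ Morvai--Yakowitz--Gy\"orfi) use stopping-time-based window selection. Either route yields the claimed $P$-almost sure convergence (\ref{Ornstein}).
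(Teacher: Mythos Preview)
The paper does not give its own proof of Theorem~\ref{theoOrnstein}; it is stated as a classical result and attributed to Ornstein, so there is nothing in the paper to compare your argument against directly.

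That said, your proposal contains a genuine gap that you yourself flag but do not close. The first three paragraphs set up a PPM-type Bayesian mixture $R=\sum_k w_k R_k$ and reduce the problem to controlling the posterior $\pi_n(k)$. You then acknowledge that the combining step is ``the main obstacle'' and defer to Ornstein's iterative partitioning or to the Morvai--Yakowitz--Gy\"orfi stopping-time scheme. But neither of those is a Bayesian mixture: they are data-driven window selectors that pick a single past block length $k(n)$ by looking for recurrences of the most recent context, which is a fundamentally different mechanism from averaging over Markov orders with posterior weights. Invoking them at the end does not validate the mixture approach; it replaces it.

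Concretely, the redundancy bounds you propose to use (SMB plus KT redundancy) control the cumulative log-loss $-\log R_k(X_{-n}^{-1})$, hence the normalizing constant in $\pi_n$, but they do not imply that $\pi_n$ concentrates on orders $k(n)\to\infty$ at a rate compatible with the ergodic-theorem convergence of $R_{k}(x_0\,|\,X_{-n}^{-1})$ to $P(x_0\,|\,X_{-k}^{-1})$. For a general ergodic $P$ the posterior can put most of its mass on moderate orders for arbitrarily long stretches, and nothing in your sketch rules out oscillation of $R(x_0\,|\,X_{-n}^{-1})$ along such stretches. Indeed, it is not established in the literature that PPM-style mixtures are universal backward estimators in the pointwise sense of (\ref{Ornstein}); the paper itself only claims forward (Ces\`aro) universality for PPM via Theorems~\ref{theoUniversal} and~\ref{theoPPMDomination}. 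If you want an actual proof, you need to carry out one of the constructions you cite rather than hope the mixture inherits their conclusion.
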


\begin{Definition}
  \label{defiUniversalBackward}
  We call a measure $R$ an almost surely universal backward estimator
  when it satisfies condition (\ref{Ornstein}) $P$-almost surely for
  every stationary ergodic measure $P$, whereas it is called a
  (weakly) $n$-universal backward estimator if $R$ is computable and
  convergence (\ref{Ornstein}) holds on all respective (weakly)
  $n$-$P$-random points.
\end{Definition}

One can come up with a naive idea: What if we take a
universal backward estimator and use it in a forward fashion?
Surprisingly, this simple trick gives us almost everything we can get,
i.e., a forward estimator that converges to the conditional
probability on average. Bailey \cite{Bailey76} showed that for an
almost surely universal backward estimator $R$ and for every
stationary ergodic measure $P$ we have $P$-almost surely
\begin{align}
    \label{Bailey}
    \lim_{n\to\infty}\frac{1}{n}\sum_{i=0}^{n-1}\sum_{x_{i+1}\in\mathbb{X}}
    \abs{R(x_{i+1}|X_{1}^{i})-P(x_{i+1}|X_{1}^{i})}=0.
  \end{align}
The proof of this fact is a direct application of the Breiman ergodic
theorem. Since we have a stronger effective version of the Breiman
theorem (Theorem \ref{theoBreiman}), we can strengthen Bailey's result
to an effective version as well.  It turns out that even if we take a
backward estimator that is good only almost surely (possibly failing
on some random points), then the respective result for the forward
estimation will hold in the strong sense---on every $1$-$P$-random
point.
\begin{Theorem}[effective Bailey theorem]
  \label{theoBailey}
  Let $R$ be a computable almost surely universal backward
  estimator. For every stationary ergodic measure $P$ on
  $1$-$P$-random points we have (\ref{Bailey}).
\end{Theorem}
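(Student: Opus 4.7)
The plan is to rewrite the Ces\`aro sum on the left-hand side of (\ref{Bailey}) as $\tfrac{1}{n}\sum_{i=0}^{n-1} G_i\circ T^i$ for a suitable family $(G_i)_{i\ge 0}$ and then invoke the effective Breiman ergodic theorem (Proposition \ref{theoBreiman}). Fix any representation $s$ of $P$ and set
\begin{align*}
  G_i(x):=\sum_{y\in\mathbb{X}}\abs{R(y|x_{1-i}^0)-P(y|x_{1-i}^0)},
\end{align*}
where, following the paper's convention, $R(y|x_{1-i}^0)$ and $P(y|x_{1-i}^0)$ are the conditional probabilities of observing $y$ at position $1$ given the $i$ preceding coordinates (so $G_0$ reduces to the unconditional total variation). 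Since $(T^i x)_j=x_{j+i}$, we have $(T^i x)_{1-i}^0=x_1^i$, and stationarity of $P$ identifies its conditional with $P(x_{i+1}=y|x_1^i)$; hence $G_i\circ T^i(x)=\sum_{y\in\mathbb{X}}\abs{R(y|x_1^i)-P(y|x_1^i)}$, so (\ref{Bailey}) is precisely the assertion $\lim_n\tfrac{1}{n}\sum_{i=0}^{n-1}G_i\circ T^i=0$ on $1$-$P$-random points.

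I would then verify the four hypotheses of Proposition \ref{theoBreiman} for $(G_i)$. Uniform $s$-computability is clear: $R$ is computable, $P$ is $s$-computable, and all conditional probabilities in sight are ratios of cylinder values. Nonnegativity is immediate. The domination hypothesis is handled by the crude bound $0\le G_i\le 2$, as $G_i$ is a total variation distance between two probability distributions on the finite alphabet $\mathbb{X}$, so $\mean\sup_i G_i\le 2<\infty$. The existence of $\lim_n G_n$ $P$-almost surely follows by combining the Ornstein hypothesis (\ref{Ornstein}) on $R$, which gives $\sum_y\abs{R(y|X_{-n}^{-1})-P(y|X_{-\infty}^{-1})}\to 0$ $P$-a.s., with the classical L\'evy martingale convergence $P(y|X_{-n}^{-1})\to P(y|X_{-\infty}^{-1})$ $P$-a.s. for each $y\in\mathbb{X}$; a triangle inequality and stationarity of $P$ then yield $G_n\to 0$ $P$-a.s., so $\mean\lim_n G_n=0$.

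With all hypotheses checked, Proposition \ref{theoBreiman} delivers
\begin{align*}
  \lim_{n\to\infty}\frac{1}{n}\sum_{i=0}^{n-1}G_i\circ T^i=\mean\lim_{n\to\infty}G_n=0
\end{align*}
on every $1$-$(s,P)$-random point, and hence on every $1$-$P$-random point since $s$ was an arbitrary representation of $P$. The main subtlety of the plan is the fourth hypothesis: the Ornstein assumption compares $R(\cdot|X_{-n}^{-1})$ with the \emph{infinite-past} conditional $P(\cdot|X_{-\infty}^{-1})$, whereas $G_n$ compares it with the matching \emph{finite-past} conditional $P(\cdot|X_{-n}^{-1})$, so the L\'evy law is the necessary bridge between them. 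Once that identification is made, the effectivity is free from Proposition \ref{theoBreiman}, which is the essential upgrade over the classical argument of Bailey: the backward estimator need only be good $P$-almost surely, yet the forward Ces\`aro average converges on every $1$-$P$-random point.
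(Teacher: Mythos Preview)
Your proposal is correct and follows essentially the same route as the paper: verify almost sure convergence of the backward discrepancy via the Ornstein hypothesis together with the L\'evy law, use the uniform bound (total variation $\le 2$, or $\le 1$ per symbol) for domination, and then apply the effective Breiman ergodic theorem (Proposition \ref{theoBreiman}). The only cosmetic difference is that the paper fixes a single symbol $x\in\mathbb{X}$, applies Breiman to $|R(x|X_{-n}^{-1})-P(x|X_{-n}^{-1})|$, and sums over $x$ afterwards, whereas you sum first and apply Breiman once to the total variation; this changes nothing.
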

\begin{proof}
  Let $R$ be a computable almost surely universal backward estimator.
  Fix an $x\in\mathbb{X}$. By Proposition \ref{theoLevy} (effective
  L\'evy law), for every stationary ergodic probability measure $P$ we
  have $P$-almost surely
  \begin{align}
    \lim_{n\to\infty}\abs{R(x|X^{-1}_{-n})-P(x|X_{-n}^{-1})}=0.
  \end{align}
  Note that the bound
  $0\le \abs{R(x|X^{-1}_{-n})-P(x|X_{-n}^{-1})}\le 1$ holds uniformly.
  Moreover, variables $R(x|X^{-1}_{-n})-P(x|X_{-n}^{-1})$ are
  uniformly $s$-computable for any representation $s$ of $P$. Hence,
  we can apply Theorem \ref{theoBreiman} (effective Breiman ergodic
  theorem) to obtain
  \begin{align}
    \lim_{n\to\infty}\frac{1}{n}\sum_{i=0}^{n-1}
    \abs{R(x|X_{1}^{i})-P(x|X_{1}^{i})}=\mean 0=0.
  \end{align}
  for $1$-$P$-random points. The claim follows from this immediately.
\end{proof}

\begin{Definition}
  \label{defiUniversalForward}
  We call a measure $R$ an almost surely universal forward estimator
  when it satisfies condition (\ref{Bailey}) $P$-almost surely for
  every stationary ergodic measure $P$, whereas it is called a
  (weakly) $n$-universal forward estimator if $R$ is computable and
  convergence (\ref{Bailey}) holds on all respective (weakly)
  $n$-$P$-random points.
\end{Definition}

One can expect that the predictor $f_R$ induced by a universal forward
estimator $R$ in the sense of Definition \ref{defiUniversalForward} is
also universal in the sense of Definition
\ref{defiUniversalPredictor}.  This is indeed true. To show this fact,
we will first prove a certain inequality for induced predictors, which
generalizes the result from \cite[Theorem 2.2]{DevroyeGyorfiLugosi96}
for binary classifiers. This particular observation seems to be new.
\begin{Proposition}[prediction inequality]
  \label{theoBound}
  Let $p$ and $q$ be two probability distributions over a countable
  alphabet $\mathbb{X}$. For $x_p=\argmax_{x\in\mathbb{X}} p(x)$ and
  $x_q=\argmax_{x\in\mathbb{X}} q(x)$, we have inequality
\begin{align}
  0&\le p(x_p)-p(x_q)\le \sum_{x\in\mathbb{X}} \abs{p(x)-q(x)}.
    \label{Bound}
\end{align}  
\end{Proposition}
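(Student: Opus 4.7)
The left inequality is immediate from the definition of $x_p$ as an $\argmax$ of $p$, since $p(x_q)\le p(x_p)$ whether or not $x_p=x_q$. So the entire content lies in the right inequality.

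My plan for the right inequality is the standard add-and-subtract trick: write
\begin{align}
p(x_p)-p(x_q)=\bigl[p(x_p)-q(x_p)\bigr]+\bigl[q(x_p)-q(x_q)\bigr]+\bigl[q(x_q)-p(x_q)\bigr].
\end{align}
By the defining property of $x_q=\argmax q$, the middle bracket is nonpositive, so
\begin{align}
p(x_p)-p(x_q)\le \bigl[p(x_p)-q(x_p)\bigr]+\bigl[q(x_q)-p(x_q)\bigr]\le \abs{p(x_p)-q(x_p)}+\abs{p(x_q)-q(x_q)}.
\end{align}

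The only subtlety I see is that the final bound $\abs{p(x_p)-q(x_p)}+\abs{p(x_q)-q(x_q)}\le \sum_{x\in\mathbb{X}}\abs{p(x)-q(x)}$ is immediate when $x_p\ne x_q$ (the two summands are two distinct terms of the nonnegative sum on the right), but fails termwise when $x_p=x_q$. However, in that case the left-hand side $p(x_p)-p(x_q)$ is simply zero, so the inequality is trivial. I would therefore split the argument into these two cases at the very end, or equivalently just remark that when $x_p=x_q$ the claim reduces to $0\le\sum_x\abs{p(x)-q(x)}$, which is obvious.

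I do not anticipate any real obstacle; the proposition is essentially the generalization of the binary-classifier bound of Devroye, Györfi, and Lugosi cited in the text, and the argmax property of $x_q$ does all the work. The only thing one has to be a little careful about is that $\argmax$ might not be unique, but the paper has fixed a tie-breaking rule (the minimum under the order $a_1<\dots<a_D$), and the argument above uses only the inequality $q(x_p)\le q(x_q)$, which holds for any valid choice of $\argmax$.
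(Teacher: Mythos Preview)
Your proof is correct and is essentially identical to the paper's: both add $q(x_q)-q(x_p)\ge 0$ (equivalently, drop your nonpositive middle bracket) to obtain $p(x_p)-p(x_q)\le\abs{p(x_p)-q(x_p)}+\abs{p(x_q)-q(x_q)}$, and both dispose of the case $x_p=x_q$ separately (the paper assumes $x_p\neq x_q$ without loss of generality at the outset, you handle it at the end).
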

\begin{proof}
  Without loss of generality, assume $x_p\neq x_q$.  By the definition
  of $x_p$ and $x_q$, we have $p(x_p)-p(x_q)\ge 0$ and
  $q(x_q)-q(x_p)\ge 0$.  Hence we obtain
  \begin{align}
    0&\le p(x_p)-p(x_q)
       \le p(x_p)-p(x_q)
       -q(x_p)+q(x_q)
       \nonumber\\
     &\le \abs{p(x_p)-q(x_p)}
       +\abs{p(x_q)-q(x_q)}
       \le \sum_{x}\abs{p(x)-q(x)}.
  \end{align}
\end{proof}

Now we can show a general result about universal predictors induced by
forward estimators of conditional probabilities.
\begin{Theorem}[effective induced prediction I]
  \label{theoBaileyInduced}
  For a $1$-universal forward estimator $R$, the induced predictor
  $f_R$ is $1$-universal if $f_R$ is computable.
\end{Theorem}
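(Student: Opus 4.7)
The plan is to establish inequality (\ref{UniversalPredictor}) with $f=f_R$ by sandwiching the empirical error rate between two quantities both converging to $u_P$. The lower bound $\liminf_{n\to\infty}\frac{1}{n}\sum_{i=0}^{n-1}\boole{X_{i+1}\neq f_R(X_1^i)}\ge u_P$ on $1$-$P$-random points is already contained in Theorem \ref{theoPredictor} (effective source prediction), since $f_R$ is assumed computable and hence $s$-computable for any representation $s$ of $P$. So the main work is the matching upper bound.

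First, I would apply Theorem \ref{theoAzuma} (effective Azuma theorem) to the uniformly $s$-computable random variables $Z_i:=\boole{X_{i+1}\neq f_R(X_1^i)}$, which are bounded by $1$ and therefore trivially satisfy the growth condition $|Z_n|\le\epsilon_n\sqrt{n/\ln n}$ with $\epsilon_n\to 0$. This yields, on $1$-$(s,P)$-random points,
\begin{align*}
\lim_{n\to\infty}\frac{1}{n}\sum_{i=0}^{n-1}\kwad{\boole{X_{i+1}\neq f_R(X_1^i)}-P(X_{i+1}\neq f_R(X_1^i)\mid X_1^i)}=0.
\end{align*}
It therefore suffices to show $\limsup_{n\to\infty}\frac{1}{n}\sum_{i=0}^{n-1}P(X_{i+1}\neq f_R(X_1^i)\mid X_1^i)\le u_P$.

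Next I would invoke Proposition \ref{theoBound} (the prediction inequality) with $p(\cdot)=P(\cdot\mid X_1^i)$ and $q(\cdot)=R(\cdot\mid X_1^i)$, noting that $x_p$ agrees with $f_P(X_1^i)$ in value of $p(x_p)$ and $x_q=f_R(X_1^i)$. This gives the pointwise bound
\begin{align*}
P(X_{i+1}\neq f_R(X_1^i)\mid X_1^i)\le\kwad{1-\max_{x\in\mathbb{X}}P(x\mid X_1^i)}+\sum_{x\in\mathbb{X}}\abs{R(x\mid X_1^i)-P(x\mid X_1^i)}.
\end{align*}
The Cesaro average of the first bracket converges to $u_P$ on $1$-$(s,P)$-random points exactly as in the proof of Theorem \ref{theoPredictor}, by combining Proposition \ref{theoLevy} (effective L\'evy law), Proposition \ref{theoBreiman} (effective Breiman ergodic theorem), and dominated convergence. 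The Cesaro average of the second term vanishes on $1$-$P$-random points because $R$ is a $1$-universal forward estimator by hypothesis. Assembling the three ingredients produces the upper bound, and combined with the lower bound from Theorem \ref{theoPredictor} delivers the required equality.

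I do not expect a serious obstacle here: the hypothesis that $f_R$ is computable sidesteps the only delicate point (the noncomputability of $\argmax$ ties between real numbers). The main conceptual step is recognizing that the prediction inequality converts $L^1$-closeness of the conditional distributions into closeness of the optimal conditional error probabilities, after which the effective ergodic machinery of Subsection \ref{secKnown} does the rest.
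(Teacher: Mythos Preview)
Your proposal is correct and follows essentially the same route as the paper: apply the effective Azuma theorem to $\boole{X_{i+1}\neq f_R(X_1^i)}$, use Proposition~\ref{theoBound} to control $P(X_{i+1}\neq f_R(X_1^i)\mid X_1^i)-\bigl[1-\max_x P(x\mid X_1^i)\bigr]$ by the total variation term, and finish with the Breiman-type limit (\ref{BreimanPredictor}) together with the forward-estimator hypothesis. The only cosmetic difference is that the paper phrases the argument as showing the Ces\`aro difference between $P(X_{i+1}\neq f_R(X_1^i)\mid X_1^i)$ and $P(X_{i+1}\neq f_P(X_1^i)\mid X_1^i)$ tends to zero and then invokes (\ref{AzumaPredictor}) twice, whereas you package it as a sandwich with the lower bound coming from Theorem~\ref{theoPredictor}; your version has the mild advantage of never needing to evaluate $\boole{X_{i+1}\neq f_P(X_1^i)}$ and so sidesteps any worry about $s$-computability of $f_P$.
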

\begin{proof}
  Let $R$ be $1$-universal forward estimator. By the definition, for
  every stationary ergodic measure $P$ and all $1$-$P$-random points
\begin{align}
    \lim_{n\to\infty}
    \frac{1}{n}\sum_{i=0}^{n-1}\sum_{x_{i+1}\in\mathbb{X}}
    \abs{P(x_{i+1}|X_1^i)-R(x_{i+1}|X_1^i)}
    = 0.
\end{align}
Consequently, combining this with Proposition \ref{theoBound}
(prediction inequality) yields on $1$-$P$-random points
\begin{align}
  \lim_{n\to\infty}
  \frac{1}{n}\sum_{i=0}^{n-1}
  \kwad{P(X_{i+1}\neq f_R(X_1^i)|X_1^i)-
  P(X_{i+1}\neq f_{P}(X_1^i)|X_1^i)}
  =
  0
  .
\end{align}
Now, we notice that by (\ref{AzumaPredictor}), we have on
$1$-$P$-random points
\begin{align}
   \lim_{n\to\infty}
  \frac{1}{n}\sum_{i=0}^{n-1}
  \kwad{\boole{X_{i+1}\neq f_R(X_1^i)}-P(X_{i+1}\neq
  f_R(X_1^i)|X_1^i)}
  &=
  0
  ,
  \\
  \lim_{n\to\infty}
  \frac{1}{n}\sum_{i=0}^{n-1}
  \kwad{\boole{X_{i+1}\neq f_P(X_1^i)}-P(X_{i+1}\neq
  f_P(X_1^i)|X_1^i)}
  &=
  0
  .
\end{align}
Combining the three above observations completes the proof.
\end{proof}

Interestingly, it suffices for a measure to be a computable almost
surely universal backward estimator to yield a $1$-universal forward
estimator and, consequently, a $1$-universal predictor. In contrast,
we can easily see that a computable almost surely universal forward
estimator does not necessarily induce a $1$-universal predictor.
\begin{Theorem}
  \label{theoSpoiltInduced}
  There exists a computable almost surely universal forward estimator
  $R$ such that the induced predictor $f_{R}$ is not $1$-universal.
\end{Theorem}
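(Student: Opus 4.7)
The plan is to build a counterexample over the binary alphabet $\mathbb{X}=\{0,1\}$ with $P$ the stationary ergodic Bernoulli($2/3$) measure, for which $u_P=1/3$. Starting from any computable almost surely universal forward estimator $R_0$---whose existence is guaranteed by combining Theorem \ref{theoOrnstein} with Theorem \ref{theoBailey}---I will perturb the conditional probabilities of $R_0$ on a judiciously chosen $P$-null set, so that the averaged condition (\ref{Bailey}) defining an almost surely universal forward estimator still holds, while the induced argmax predictor is sabotaged at a specific $1$-$P$-random point. The driving intuition is that the argmax is discontinuous, so shifting a conditional by a bounded amount on a $P$-null set is invisible to (\ref{Bailey}) but can invert the prediction everywhere on that set.

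The exceptional set I want is a non-empty $\Pi^0_1$ class $[T]$ of $P$-measure zero that contains some $1$-$P$-random point $x^*$, where $T\subseteq\mathbb{X}^*$ is a computable prefix-closed tree. To produce it, I will invoke the standard fact (see \cite{DowneyHirschfeldt10}) that weakly-$2$-$P$-randomness is strictly stronger than $1$-$P$-randomness, so some $1$-$P$-random $x^*$ belongs to a $\Sigma^0_2$ null set; decomposing that set as a countable union of $\Pi^0_1$ components, one of them must contain $x^*$ and have $P$-measure zero, and in Cantor space any such $\Pi^0_1$ class has the form $[T]$ for a suitable computable $T$.

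With $T$ in hand, I will define $R$ by prescribing the conditionals
\begin{align*}
R(x_{n+1}\mid x_1^n)=\begin{cases} R_0(x_{n+1}\mid x_1^n) & \text{if } x_1^n\notin T,\\ \tfrac{2}{3}\boole{x_{n+1}=0}+\tfrac{1}{3}\boole{x_{n+1}=1} & \text{if } x_1^n\in T,\end{cases}
\end{align*}
and extending to a probability measure via the product rule; the result is computable because $T$ and $R_0$ are. To check that $R$ remains an almost surely universal forward estimator, note that $P([T])=0$ implies that for $P$-almost every $x$ only finitely many prefixes $x_1^i$ lie in $T$; on those indices the integrand of (\ref{Bailey}) is bounded by $2$, so they contribute only $O(1/n)$ to the Cesaro average, which therefore inherits its vanishing from $R_0$.

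It will then remain to show that the induced predictor $f_R$ fails to be $1$-universal at $x^*$. Every prefix $x^*_1\ldots x^*_n$ lies in $T$, so by construction $R(0\mid x^*_1\ldots x^*_n)=\tfrac{2}{3}>\tfrac{1}{3}=R(1\mid x^*_1\ldots x^*_n)$, giving $f_R(x^*_1\ldots x^*_n)=0$ for every $n$. The empirical error rate along $x^*$ is therefore $\frac{1}{n}\sum_{i=0}^{n-1}\boole{X_{i+1}=1}$, which by the effective Birkhoff ergodic theorem (Proposition \ref{theoBirkhoff}) applied at the $1$-$P$-random point $x^*$ converges to $P(X_1=1)=2/3$, strictly exceeding $u_P=1/3$ and violating (\ref{UniversalPredictor}). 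The hard part of this plan is pinning down the $\Pi^0_1$ null class capturing a $1$-random point---this is precisely what distinguishes $1$-randomness from weak $2$-randomness---while everything else is routine bookkeeping once such a set is in hand.
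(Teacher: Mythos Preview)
Your proposal has a genuine gap at its central step: the claimed $\Pi^0_1$ null class $[T]$ containing a $1$-$P$-random point does not exist. For a computable measure $P$ such as Bernoulli($2/3$), if $T$ is a computable tree with $P([T])=0$, then the clopen sets $A_n=\bigcup_{\sigma\in T,\,|\sigma|=n}[\sigma]$ have uniformly computable measures decreasing to $0$, and one can effectively search for levels $n_k$ with $P(A_{n_k})<2^{-k}$; the resulting sequence $(A_{n_k})_k$ is a Martin-L\"of $P$-test covering $[T]$, so $[T]$ contains no $1$-$P$-random point. (Avoiding all $\Pi^0_1$ null classes is exactly Kurtz randomness, which is strictly weaker than Martin-L\"of randomness.) Your decomposition argument misidentifies the arithmetical level: failure of weak $2$-randomness places $x^*$ in a $\Pi^0_2$ null set---an intersection of uniformly $\Sigma^0_1$ sets---not a $\Sigma^0_2$ null set, and $\Pi^0_2$ sets do not split as countable unions of $\Pi^0_1$ classes.

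The paper's proof confronts precisely this obstruction and works around it with a dynamic construction. It invokes Proposition~\ref{theoDelta02} to obtain a computable $g:\mathbb{X}^*\to\{0,1\}$ that equals $1$ infinitely often along one specific $\Delta^0_2$ $1$-$P_0$-random point $y$ and only finitely often $P_0$-almost surely (so the null event $\{g(X_1^i)=1\text{ i.o.}\}=\{y\}$ is $\Pi^0_2$, not $\Pi^0_1$). The spoiling then proceeds via a counter: whenever the running count $K(x_1^n)$ of $g$-ones overtakes the stored counter $U(x_1^n)$, a block of ``reversed-Bernoulli'' conditionals is inserted, long enough to push the averaged conditional error above $1/2$, after which the counter is updated. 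Almost surely only finitely many such blocks are inserted, so the spoilt measure eventually agrees with the base estimator and remains an almost surely universal forward estimator; on $y$ infinitely many blocks occur, forcing the $\limsup$ of the error rate above $u_{P_0}$. A static ``prefix lies in $T$'' trigger, which is what your construction requires for computability of $R$, cannot substitute for this dynamic mechanism.
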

\begin{proof}
  Let us take $\mathbb{X}=\klam{0,1}$ and restrict ourselves to
  one-sided space $\mathbb{X}^{\mathbb{N}}$ without loss of
  generality.  Fix a computable almost surely universal forward
  estimator $Q$. Let $P_0$ be the computable measure of a
  Bernoulli($\theta$) process, i.e.,
  $P_0(x_1^n)=\prod_{i=1}^n\theta^{x_i}(1-\theta)^{1-x_i}$, where
  $\theta>1/2$ is rational. Observe that by Proposition
  \ref{theoDelta02} there exists a point $y\in\mathbb{X}^{\mathbb{N}}$
  which is $1$-$P_0$-random and a computable function
  $g:\mathbb{X}^*\rightarrow\klam{0,1}$ such that $P_0(A)=0$ and
  $A=\klam{y}$ for event
  \begin{align}
   A:=(\#\klam{i\in\mathbb{N}:g(X_1^i)=1}=\infty). 
  \end{align}
  In other words, there is a computable method to single out some
  $1$-$P_0$-random point $y$ out of the set of sequences
  $\mathbb{X}^{\mathbb{N}}$.  In particular, we can use function $g$
  to spoil measure $Q$ on that point $y$ while preserving the property
  of an almost surely universal forward estimator. We will denote the
  spoilt version of measure $Q$ by $R$. Conditional distributions
  $R(X_{m+1}|X_1^m)$ will differ from $Q(X_{m+1}|X_1^m)$ for
  infinitely many $m$ on point $y$ and for finitely many $m$
  elsewhere.

  Let $K(x_1^n):=\#\klam{i\le n:g(x_1^i)=1}$.  The construction of
  measure $R$ proceeds by induction on the string length together with
  an auxiliary counter $U$.  We let $R(x_1):=Q(x_1)$ and
  $U(x_1):=0$. Suppose that $R(x_1^n)$ and $U(x_1^n)$ are defined but
  $R(x_1^{n+1})$ is not. If $U(x_1^n)\ge K(x_1^n)$ then we put
  $R(x_{n+1}|x_1^n):=Q(x_{n+1}|x_1^n)$ and
  $U(x_1^{n+1}):=U(x_1^n)$. Else, if $U(x_1^n)< K(x_1^n)$ then we put
  $R(x_{n+1}^{n+N}|x_1^n):=\prod_{i=n+1}^{n+N}\theta^{1-x_i}(1-\theta)^{x_i}$
  (reverted compared to the definition of $P_0$!)  and
  $U(x_{n+N}):=K(x_1^n)$ where $N$ is the smallest number such that
  \begin{align}
    \frac{1}{n+N}
    \kwad{\sum^{n-1}_{i=0}
    P_0(X_{i+1}\neq f_R(x_1^{i})|X_1^i=x_1^{i})+N\theta}
    \ge \frac{1}{2}. 
  \end{align}
  Such number $N$ exists since
  $P_0(X_{i+1}\neq f_R(x_1^{i})|X_1^i=x_1^{i})> 1-\theta$. This
  completes the construction of $R$.

  The sets of $1$-$P$-random sequences are disjoint for distinct
  stationary ergodic $P$ by Theorem \ref{theoBirkhoff} (effective Birkhoff ergodic theorem).  Hence
  $K(X_1^n)$ is bounded $P$-almost surely for any stationary ergodic
  $P$. Consequently, since $U(X_1^n)$ is non-decreasing then
  $P$-almost surely there exists a random number $M<\infty$ such that
  for all $m>M$ we have $R(X_{m+1}|X_1^m)=Q(X_{m+1}|X_1^m)$.  Hence
  $R$ inherits the property of an almost surely universal forward
  estimator from $Q$.
  
  Now let us inspect what happens on $y$. Since $K(X_1^n)$ is
  unbounded on $y$ then by the construction of $R$, we obtain on $y$
  that $U(X_1^n)<K(X_1^n)$ holds infinitely often and
  \begin{align}
    \limsup_{n\to\infty}\frac{1}{n}\sum^{n}_{i=0}
    P_0(X_{i+1}\neq f_R(X_1^{i})|X_1^{i})\ge \frac{1}{2}>u_{P_0}=1-\theta.
  \end{align}
  Hence predictor $f_{R}$ is not $1$-universal.
\end{proof}

\subsection{Predictors induced by universal measures}
\label{secInducedII}

Following the work of Ryabko \cite{Ryabko09}, see also
\cite{RyabkoAstolaMalyutov16}, we can ask a natural question whether
predictors induced by some universal measures in the sense of
Definition \ref{defiUniversalMeasure}, such as the PPM measure
\cite{ClearyWitten84,Ryabko88en2,Ryabko08} to be discussed in Section
\ref{secPPM}, are also universal. Ryabko was close to demonstrate the
analogous implication in the almost sure setting but did not provide
the complete proof. He has shown this proposition:
\begin{Theorem}[\cite{Ryabko08}]
  \label{theoRyabko}
  Let $R$ be an almost surely universal measure and $P$ be a
  stationary ergodic measure. We have $P$-almost surely
  \begin{align}
    \label{ExpectedBailey}
    \lim_{n\to\infty}\mean\frac{1}{n}\sum_{i=0}^{n-1}
    \abs{P(X_{i+1}|X_0^i)-R(X_{i+1}|X_0^i)}=0.
  \end{align}
\end{Theorem}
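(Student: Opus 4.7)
The plan is to reduce the $L^1$ statement to a bound on the growth of Kullback--Leibler divergence via Pinsker's inequality, and then to exploit the entropy-rate characterization of a universal measure together with the Shannon--McMillan--Breiman theorem.

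First I would apply Pinsker's inequality $\sum_{x}\abs{p(x)-q(x)} \leq \sqrt{(2\ln 2)\, D(p\|q)}$ pointwise to $p(\cdot) = P(\cdot|X_0^i)$ and $q(\cdot) = R(\cdot|X_0^i)$, then take the expectation under $P$, apply Jensen's inequality to pull the square root outside, and finally apply Cauchy--Schwarz across the Cesaro average. This yields
\begin{align}
\frac{1}{n}\sum_{i=0}^{n-1} \mean \sum_{x\in\mathbb{X}}\abs{P(x|X_0^i) - R(x|X_0^i)} \leq \sqrt{\frac{2\ln 2}{n}\sum_{i=0}^{n-1}\mean D\okra{P(\cdot|X_0^i)\,\|\,R(\cdot|X_0^i)}}.
\end{align}
By the chain rule for Kullback--Leibler divergence, the sum under the square root equals $\mean\kwad{-\log R(X_1^n|X_0) + \log P(X_1^n|X_0)}$, so the theorem reduces to showing $\lim_{n\to\infty}\mean\kwad{-\log R(X_1^n)}/n = h_P$; the matching bound $\liminf \geq h_P$ is automatic from Gibbs' inequality and the definition of the entropy rate.

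For the upper bound $\limsup \leq h_P$: by almost sure universality of $R$ combined with the effective SMB theorem (Theorem \ref{theoSMB}), the random quantity $\kwad{-\log R(X_1^n)+\log P(X_1^n)}/n$ tends to $0$ $P$-almost surely. The main obstacle is to convert this almost sure convergence into convergence of expectations, since $-\log R(X_1^n)$ is not uniformly bounded. The standard workaround is to consider the mixture $\tilde R := \tfrac12 R + \tfrac12 U$, where $U$ is the uniform measure on $\mathbb{X}^{\mathbb{Z}}$: then $-\log \tilde R(X_1^n) \leq 1 + n\log D$ is uniformly bounded, $\tilde R$ remains almost surely universal (its negative log differs from that of $R$ by at most $1$), so dominated convergence yields $\mean\kwad{-\log \tilde R(X_1^n)}/n \to h_P$. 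A short additional argument --- exploiting that the conditional probabilities of $\tilde R$ differ from those of $R$ by a Cesaro-vanishing amount --- transfers the conclusion back to $R$.

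The principal difficulty is precisely this uniform-integrability passage; all other steps (Pinsker, Jensen, Cauchy--Schwarz, chain rule, SMB) are routine combinations of classical tools. Since the conclusion is already stated at the level of expectations, the effective refinements developed in Subsection \ref{secKnown} are not strictly needed here, and the proof can stay at the classical almost sure level.
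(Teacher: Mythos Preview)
The paper does not actually supply a proof of Theorem \ref{theoRyabko}; it is quoted from Ryabko's work, and the only information the paper gives about the argument is the remark (just before Proposition \ref{theoPinsker}) that Ryabko's proof uses the Pinsker inequality. Your Pinsker--Jensen--Cauchy--Schwarz reduction to the expected redundancy rate $\frac{1}{n}\mean\kwad{-\log R(X_1^n)+\log P(X_1^n)}\to 0$ is therefore exactly in line with what the paper attributes to Ryabko, and there is nothing further to compare against.

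That said, there is a genuine gap in your proposal, and you have put your finger on it without closing it. The claim $\mean\kwad{-\log R(X_1^n)}/n\to h_P$ does \emph{not} follow from almost-sure universality as defined in Definition \ref{defiUniversalMeasure}: starting from any nice universal $Q$ one can, on events of $P$-probability $2^{-i}$, replace a single conditional probability by something like $2^{-4^i}$; Borel--Cantelli preserves almost-sure universality, while the expected pointwise redundancy blows up. Your mixture $\tilde R=\tfrac12 R+\tfrac12 U$ correctly gives $\mean\kwad{-\log\tilde R(X_1^n)}/n\to h_P$ by dominated convergence, but the ``short additional argument'' transferring the conclusion back to $R$ is precisely the step you have not supplied. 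Writing $\tilde R(x\mid X_1^i)=\alpha_i R(x\mid X_1^i)+(1-\alpha_i)D^{-1}$ with $1-\alpha_i=D^{-i}/(R(X_1^i)+D^{-i})$, transferring the total-variation conclusion requires $\frac{1}{n}\sum_i\mean_P[1-\alpha_i]\to 0$. This does follow by dominated convergence when $h_P<\log D$ (then $1-\alpha_i\to 0$ $P$-a.s.), but is not clear when $h_P=\log D$; and in no case can the transfer recover the stronger assertion $\mean\kwad{-\log R(X_1^n)}/n\to h_P$, which may simply be false. So either a separate argument is needed for the maximal-entropy edge case, or---more plausibly---Ryabko's original hypothesis is universality in the expected-redundancy sense (as the paper's introduction in fact phrases it informally), in which case your Pinsker reduction is complete with no integrability issue at all.
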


Condition (\ref{ExpectedBailey}) is quite close to condition
(\ref{Bailey}), i.e., universal forward estimator, which---as we have
shown in Theorem \ref{theoBaileyInduced}---implies universality of the
induced predictor. Close does not mean exactly the same. If we would
like to derive universality of the induced predictor directly from
(\ref{ExpectedBailey}), there are two problems on the way (where
$Y_n\ge 0$ stands for the expression under the expectation): Firstly,
$\lim_{n\to\infty} \mean Y_n=0$ does not necessarily imply
$\mean\lim_{n\to\infty} Y_n=0$ since the limit may not exist almost
surely and, secondly, if $\mean\lim_{n\to\infty} Y_n=0$ then
$\lim_{n\to\infty} Y_n=0$ holds almost surely but this equality may
fail on some $1$-random points.

In spite of these difficulties, Ryabko
\cite{Ryabko09} showed in the almost sure setting that there exist
universal forward estimators which are generated by some universal
measures. In view of Theorem \ref{theoBaileyInduced}, this solves the
question of existence of universal measures that induce universal
predictors. But it does not solve yet the problem of effectivization
to algorithmically random points and of systematicity of the
construction.

In this section, we will show that each $1$-universal measure, under a
relatively mild condition (\ref{CondDominationIntro}), satisfied by
the PPM measure, is a $1$-universal forward estimator and hence, in
the light of the previous section, it induces a $1$-universal
predictor. We do not know yet whether this condition is necessary. We
will circumvent Theorem \ref{theoRyabko} by applying Proposition
\ref{theoBreiman} (effective Breiman ergodic theorem) and Theorem
\ref{theoAzuma} (effective Azuma theorem).  The first stage of our
preparations includes two statements which can be called the effective
conditional SMB theorem and the effective conditional universality.
\begin{Proposition}[effective conditional SMB theorem]
    \label{theoExtSMB}
    Let the alphabet be finite and let $P$ be a stationary ergodic
    probability measure. On $1$-$P$-random points we have
 \begin{align}
    \label{ExtSMB}
   \lim_{n\to\infty}
   \frac{1}{n}\sum_{i=0}^{n-1}
   \kwad{-\sum_{x_{i+1}\in\mathbb{X}}P(x_{i+1}|X_1^i)
   \log P(x_{i+1}|X_1^i)}=h_{P}.
 \end{align}
\end{Proposition}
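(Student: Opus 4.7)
The plan is to mirror the proof sketch the authors already gave for the effective SMB theorem (Theorem~\ref{theoSMB}), with $-\log P(X_0 \mid X_{-n}^{-1})$ replaced by the full conditional entropy of $X_0$ given $X_{-n}^{-1}$. Concretely, fix a representation $s$ of $P$ and define
\begin{align}
G_n(x) := -\sum_{a\in\mathbb{X}} P(X_0=a\mid X_{-n}^{-1}=x_{-n}^{-1})
\log P(X_0=a\mid X_{-n}^{-1}=x_{-n}^{-1}).
\end{align}
These are uniformly $s$-computable (they are continuous rational combinations of the $s$-computable conditional probabilities, defined wherever $P(X_{-n}^{-1}=x_{-n}^{-1})>0$, which happens on every $1$-$(s,P)$-random point). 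Moreover, $0\le G_n\le \log D$ uniformly, so $\mean\sup_n G_n\le\log D<\infty$.

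Next, I would use Proposition~\ref{theoLevy} (effective L\'evy law) to deduce that for each $a\in\mathbb{X}$ the limit $P(X_0=a\mid X_{-\infty}^{-1})=\lim_n P(X_0=a\mid X_{-n}^{-1})$ exists on all $1$-$P$-random points. Since the Shannon entropy functional $(p_a)_a\mapsto -\sum_a p_a\log p_a$ is continuous on the probability simplex, $G_n$ converges on $1$-$P$-random points to
\begin{align}
G_\infty(x):=-\sum_{a\in\mathbb{X}} P(X_0=a\mid X_{-\infty}^{-1})\log P(X_0=a\mid X_{-\infty}^{-1}).
\end{align}
All hypotheses of Proposition~\ref{theoBreiman} (effective Breiman ergodic theorem) are now satisfied, yielding, on $1$-$P$-random points,
\begin{align}
\lim_{n\to\infty}\frac{1}{n}\sum_{i=0}^{n-1} G_i\circ T^i = \mean G_\infty = h_P,
\end{align}
where the identification $\mean G_\infty=h_P$ is the standard martingale/dominated convergence computation that equates the entropy rate with the expected conditional entropy given the infinite past.

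It remains to match indices. By stationarity, $G_i(T^{i+1}x)=-\sum_{a} P(X_{i+1}=a\mid X_1^i)\log P(X_{i+1}=a\mid X_1^i)$, which is exactly the $i$-th summand of (\ref{ExtSMB}). The two Ces\`aro averages $\frac{1}{n}\sum_{i=0}^{n-1}G_i\circ T^i$ and $\frac{1}{n}\sum_{i=0}^{n-1}G_i\circ T^{i+1}$ differ only by a reindexing that alters finitely many boundary terms; since $\abs{G_i}\le\log D$, the discrepancy is $O(1/n)$ and vanishes. This delivers (\ref{ExtSMB}). The main subtlety I expect is the bookkeeping around $s$-computability of $G_n$ on the full-measure set where conditioning is defined, and justifying $\mean G_\infty=h_P$ cleanly; both are standard but must be invoked carefully. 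Otherwise the argument is a clean replay of the Breiman+L\'evy template that worked for the effective SMB theorem.
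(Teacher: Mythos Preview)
Your approach mirrors the paper's proof exactly: both invoke the effective L\'evy law (Proposition~\ref{theoLevy}) for pointwise convergence of the conditional-entropy functionals, the uniform bound $0\le G_n\le\log D$ to control $\mean\sup_n G_n$, and then the effective Breiman ergodic theorem (Proposition~\ref{theoBreiman}) to pass to the Ces\`aro limit and identify it with $h_P$. The one slip is your final index-matching step: the two averages $\frac{1}{n}\sum_{i=0}^{n-1}G_i\circ T^i$ and $\frac{1}{n}\sum_{i=0}^{n-1}G_i\circ T^{i+1}$ are \emph{not} related by a reindexing with $O(1)$ boundary terms---the $i$-th summands are evaluated at genuinely different shifted points, and no telescoping occurs. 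The clean fix is simply to apply Proposition~\ref{theoBreiman} to $\tilde G_i:=G_i\circ T$ (or, equivalently, build the extra shift into the definition of $G_n$ from the outset); these satisfy the same hypotheses, $\tilde G_i\circ T^i=G_i\circ T^{i+1}$ is exactly the summand in~(\ref{ExtSMB}), and $\mean\lim_n\tilde G_n=\mean(G_\infty\circ T)=\mean G_\infty=h_P$ by stationarity. The paper glosses over the analogous shift in the same spirit.
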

\begin{proof}
  Let us write the conditional entropy
  \begin{align}
    W_i:=\kwad{-\sum_{x_{i+1}\in\mathbb{X}}P(x_{i+1}|X_1^i)
    \log P(x_{i+1}|X_1^i)}.
    \label{CondH}
  \end{align}
  We have $0\le W_i\le \log D$ with $D$ being the cardinality of the
  alphabet. Moreover by Proposition \ref{theoLevy} (effective L\'evy
  law), on $1$-$P$-random points there exists limit
  \begin{align}
    \lim_{n\to\infty} W_n\circ T^{-n-1}=
    \kwad{-\sum_{x_0\in\mathbb{X}}P(x_0|X_{-\infty}^{-1})
    \log P(x_0|X_{-\infty}^{-1})}.
  \end{align}
  Hence by Proposition \ref{theoBreiman} (effective Breiman ergodic
  theorem), on $1$-$(s,P)$-random points 
  \begin{align}
     \lim_{n\to\infty}
    \frac{1}{n}\sum_{i=0}^{n-1} W_i=
    \mean \kwad{-\sum_{x_0\in\mathbb{X}}P(x_0|X_{-\infty}^{-1})
    \log P(x_0|X_{-\infty}^{-1})}=h_{P}
  \end{align}
  since
  $\mean\kwad{-\log
    P(X_0|X_{-\infty}^{-1})}=\lim_{n\to\infty}\kwad{-\log
    P(X_1^n)}/n=h_{P}$.
\end{proof}

\begin{Proposition}[effective conditional universality]
  \label{theoExtUniversal}
  Let the alphabet be finite and let $P$ be a stationary ergodic
  probability measure.
  \begin{align}
    \label{CondDomination}
    -\log R(x_{n+1}|x_1^n)&\le\epsilon_n\sqrt{n/\ln n},
                          \quad \lim_{n\to\infty} \epsilon_n=0
  \end{align}
  then on $1$-$P$-random points we have
  \begin{align}
    \label{ExtUniversal}
    \lim_{n\to\infty}
    \frac{1}{n}\sum_{i=0}^{n-1}
    \kwad{-\sum_{x_{i+1}\in\mathbb{X}}P(x_{i+1}|X_1^i)
    \log R(x_{i+1}|X_1^i)}=h_{P}.
  \end{align}  
\end{Proposition}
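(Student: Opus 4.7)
The plan is to rewrite the target sum as an average of conditional expectations of the per-symbol log-loss, use $1$-universality of $R$ to compute the $\tfrac{1}{n}\sum Z_i$ limit unconditionally, and then strip the conditioning via the effective Azuma theorem of Theorem \ref{theoAzuma}. The bound (\ref{CondDomination}) is exactly what makes the martingale increments small enough for Azuma to apply.

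First I would set $Z_i := -\log R(X_i\mid X_1^{i-1})$ for $i\ge 1$. The chain rule for $R$ gives the telescoping identity
\begin{align}
-\log R(X_1^n) = \sum_{i=1}^n Z_i,
\end{align}
and by definition
\begin{align}
\mean\okra{Z_{i+1}\mid X_1^{i}} = -\sum_{x_{i+1}\in\mathbb{X}} P(x_{i+1}\mid X_1^i)\log R(x_{i+1}\mid X_1^i),
\end{align}
so the expression on the left of (\ref{ExtUniversal}) equals $\tfrac{1}{n}\sum_{i=1}^n \mean(Z_i\mid X_1^{i-1})$.

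Next I would invoke $1$-universality of $R$ (Definition \ref{defiUniversalMeasure}) which, on $1$-$P$-random points, gives $\tfrac{1}{n}\sum_{i=1}^n Z_i = \tfrac{1}{n}[-\log R(X_1^n)]\to h_P$. It remains to show that $\tfrac{1}{n}\sum_{i=1}^n[Z_i - \mean(Z_i\mid X_1^{i-1})]\to 0$ on $1$-$(s,P)$-random points for some representation $s$ of $P$. Since $R$ is computable and each $Z_i$ is a computable function of $X_1^i$, the random variables $(Z_i)$ are uniformly $s$-computable for any representation $s$ of $P$. Moreover $Z_i\ge 0$ and by hypothesis (\ref{CondDomination}), $|Z_i|=Z_i\le \epsilon_{i-1}\sqrt{(i-1)/\ln(i-1)}$, which is bounded by $\tilde\epsilon_i\sqrt{i/\ln i}$ for a suitable sequence $\tilde\epsilon_i\to 0$. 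Hence Theorem \ref{theoAzuma} applies and yields precisely the required vanishing.

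Combining the two limits, on $1$-$(s,P)$-random points we obtain $\tfrac{1}{n}\sum_{i=1}^n \mean(Z_i\mid X_1^{i-1})\to h_P$, which after reindexing $i\mapsto i+1$ is (\ref{ExtUniversal}). The only subtle point, and effectively the heart of the argument, is the verification that the increment bound (\ref{CondDomination}) is sharp enough to drive the Azuma estimate; everything else is bookkeeping about the chain rule and $s$-computability. No step presents a genuine obstacle once one notes that (\ref{CondDomination}) was engineered exactly to feed into Theorem \ref{theoAzuma}.
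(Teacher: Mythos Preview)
Your proof is correct and follows essentially the same route as the paper: define the per-symbol log-loss $Z_i$, use the chain rule and $1$-universality of $R$ to get $\frac{1}{n}\sum Z_i\to h_P$, and apply the effective Azuma theorem (Theorem~\ref{theoAzuma}) under the increment bound (\ref{CondDomination}) to replace $Z_i$ by its conditional expectation. The only differences from the paper are cosmetic---your indexing convention $Z_i=-\log R(X_i\mid X_1^{i-1})$ versus the paper's $Z_i=-\log R(X_{i+1}\mid X_1^i)$, and your slightly more explicit bookkeeping about $s$-computability and the index shift in the bound.
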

\begin{proof}
  Let us write the conditional pointwise entropy
  $Z_i:=-\log R(X_{i+1}|X_1^i)$.
  Now suppose that measure $R$ is $1$-universal and satisfies
  (\ref{CondDomination}). Then by Theorem \ref{theoAzuma} (effective
  Azuma theorem), on $1$-$P$-random points we obtain
\begin{align}
  \lim_{n\to\infty}\frac{1}{n}\sum_{i=0}^{n-1}
  \mean\okra{Z_i\middle|X_1^i}
  =\lim_{n\to\infty}\frac{1}{n}\sum_{i=0}^{n-1}
  Z_i
  =
  \lim_{n\to\infty}\frac{1}{n}\kwad{-\log R(X_1^n)}=h_{P}
  ,
\end{align}
which is the claim of Proposition \ref{theoExtUniversal}.
\end{proof}

In the second stage of our preparations, we recall the famous Pinsker
inequality used by Ryabko \cite{Ryabko09} to prove Theorem
\ref{theoRyabko}.
\begin{Proposition}[Pinsker inequality \cite{CsiszarKorner11}]
  \label{theoPinsker}
  Let $p$ and $q$ be probability distributions over a
  countable alphabet $\mathbb{X}$.  We have
  \begin{align}
    \kwad{\sum_{x\in\mathbb{X}} \abs{p(x)-q(x)}}^2\le
    (2\ln 2) \sum_{x\in\mathbb{X}} p(x)\log\frac{p(x)}{q(x)}.
  \end{align}
\end{Proposition}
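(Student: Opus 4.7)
The plan is to reduce the multi-alphabet statement to the case $|\mathbb{X}|=2$ and then handle the binary case by elementary calculus. Throughout I use natural logarithms, noting that the bound as stated (with binary $\log$ on the right-hand side) is exactly $\bigl[\sum_x |p(x)-q(x)|\bigr]^2 \le 2\,D(p\|q)$, where $D(p\|q)=\sum_x p(x)\ln\bigl(p(x)/q(x)\bigr)$ is the KL divergence in nats.

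First I would partition the alphabet as $A=\{x\in\mathbb{X}: p(x)\ge q(x)\}$ and let $\bar p:=p(A)$, $\bar q:=q(A)$. A direct computation gives
\begin{align}
\sum_{x\in\mathbb{X}}\abs{p(x)-q(x)} = 2(\bar p-\bar q),
\end{align}
because the total mass of $p$ and of $q$ agree, so the positive and negative parts of $p-q$ have equal absolute sums. Next I invoke the log-sum inequality (equivalently, the data-processing inequality for KL divergence under the two-cell partition $\{A,\mathbb{X}\setminus A\}$) to obtain
\begin{align}
D(p\|q) \ge \bar p\ln\frac{\bar p}{\bar q}+(1-\bar p)\ln\frac{1-\bar p}{1-\bar q}.
\end{align}
Thus it suffices to prove the Bernoulli version: for $a,b\in[0,1]$,
\begin{align}
2(a-b)^2 \le a\ln\frac{a}{b}+(1-a)\ln\frac{1-a}{1-b}.
\end{align}

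For the binary inequality I fix $b$ and set $\varphi(a):=a\ln(a/b)+(1-a)\ln((1-a)/(1-b))-2(a-b)^2$. A direct check gives $\varphi(b)=0$ and $\varphi'(b)=0$, while
\begin{align}
\varphi''(a)=\frac{1}{a(1-a)}-4 \ge 0,
\end{align}
since $a(1-a)\le 1/4$ on $[0,1]$. Hence $\varphi$ attains its minimum at $a=b$, giving $\varphi(a)\ge 0$ as required. Combining the binary bound with the reduction step and converting between $\ln$ and $\log$ (which accounts for the factor $2\ln 2$) yields the stated inequality.

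The only delicate point is the reduction step: one must verify the log-sum / data-processing bound cleanly, and handle the degenerate cases $\bar q=0$ or $\bar q=1$ (which force $\bar p=\bar q$ by absolute continuity, trivializing both sides). Everything else is textbook calculus, so I expect no real obstacle beyond writing the reduction carefully.
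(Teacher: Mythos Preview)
The paper does not supply its own proof of this proposition; it is stated with a citation to Csisz\'ar--K\"orner and used as a black box in the proof of Theorem~\ref{theoUniversal}. So there is nothing to compare against on the paper's side.

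Your argument is the standard one and is correct: coarse-grain to the two-cell partition $\{A,\mathbb{X}\setminus A\}$ via the log-sum (data-processing) inequality, then verify the Bernoulli case by showing that $\varphi(a)=d(a\|b)-2(a-b)^2$ is convex in $a$ with $\varphi(b)=\varphi'(b)=0$. One small wording issue: in the degenerate case $\bar q=0$ with $\bar p>0$ you do not get $\bar p=\bar q$ ``by absolute continuity''; rather, there is some $x$ with $p(x)>0=q(x)$, so $D(p\|q)=\infty$ and the inequality is vacuous. The remaining boundary cases ($\bar p=\bar q\in\{0,1\}$) indeed make both sides zero. With that clarification the proof goes through without obstacle.
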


Now we can show the main result of this section, namely, that every
universal measure which satisfies a mild condition induces a
universal predictor.
\begin{Theorem}[effective induced prediction II]
  \label{theoUniversal}
  If measure $R$ is $1$-universal and satisfies (\ref{CondDomination})
  then it is a $1$-universal forward estimator.
\end{Theorem}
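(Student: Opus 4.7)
The plan is to combine the two preparatory propositions (effective conditional SMB and effective conditional universality) via the Pinsker inequality, noting that the difference of the two limits is exactly the averaged Kullback-Leibler divergence between $P(\cdot|X_1^i)$ and $R(\cdot|X_1^i)$, which must therefore vanish in Cesàro mean.

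First, I would write the KL divergence term explicitly as
\begin{align}
D_i := \sum_{x_{i+1}\in\mathbb{X}} P(x_{i+1}|X_1^i)\log\frac{P(x_{i+1}|X_1^i)}{R(x_{i+1}|X_1^i)},
\end{align}
which is nonnegative. Subtracting (\ref{ExtSMB}) from (\ref{ExtUniversal}), both limits being $h_P$, gives that on all $1$-$P$-random points
\begin{align}
\lim_{n\to\infty}\frac{1}{n}\sum_{i=0}^{n-1} D_i = 0.
\end{align}
This is where the two results from Subsection~\ref{secKnown}, repackaged as Propositions~\ref{theoExtSMB} and \ref{theoExtUniversal}, combine in a clean telescoping manner; the hypothesis (\ref{CondDomination}) is consumed in invoking Proposition~\ref{theoExtUniversal}.

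Next, by Proposition~\ref{theoPinsker} applied pointwise at each prefix $X_1^i$, we have
\begin{align}
\left[\sum_{x_{i+1}\in\mathbb{X}} \abs{P(x_{i+1}|X_1^i)-R(x_{i+1}|X_1^i)}\right]^2 \le (2\ln 2)\, D_i.
\end{align}
Now I would use the Cauchy-Schwarz inequality on the Cesàro average of the left-hand side square roots: writing $a_i := \sum_{x}|P(x|X_1^i)-R(x|X_1^i)|$, we get
\begin{align}
\left(\frac{1}{n}\sum_{i=0}^{n-1} a_i\right)^2 \le \frac{1}{n}\sum_{i=0}^{n-1} a_i^2 \le \frac{2\ln 2}{n}\sum_{i=0}^{n-1} D_i,
\end{align}
and the right-hand side tends to zero on $1$-$P$-random points by the previous step.

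The main (only) obstacle is ensuring that the hypothesis (\ref{CondDomination}) is actually needed here rather than some weaker condition; it enters exclusively through Proposition~\ref{theoExtUniversal}, which relies on the effective Azuma theorem and thus on the bounded-increment bound $\epsilon_n\sqrt{n/\ln n}$. Beyond that, the argument is essentially a short algebraic manipulation once the two effective limit theorems are available, and it automatically yields convergence on every $1$-$P$-random point rather than merely almost surely. Combined with Theorem~\ref{theoBaileyInduced}, this gives that the induced predictor $f_R$ is $1$-universal whenever it is computable, completing the program initiated by Ryabko.
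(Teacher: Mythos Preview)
Your proposal is correct and follows essentially the same route as the paper's own proof: combine Propositions~\ref{theoExtSMB} and~\ref{theoExtUniversal} to obtain that the Ces\`aro average of the KL divergences $D_i$ vanishes on $1$-$P$-random points, apply the Pinsker inequality termwise, and then use Cauchy--Schwarz (equivalently $\mean Y^2\ge(\mean Y)^2$ for the uniform average) to pass from the average of $a_i^2$ to the square of the average of $a_i$. The paper's argument is organized identically, so there is nothing to add.
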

\begin{proof}
  Let $R$ be a $1$-universal measure, whereas $P$ be the stationary
  ergodic measure.  By Propositions \ref{theoExtSMB} (effective
  conditional SMB theorem) and \ref{theoExtUniversal} (effective
  conditional universality), on $1$-$P$-random points we obtain
  \begin{align}
    \lim_{n\to\infty}
    \frac{1}{n}\sum_{i=0}^{n-1}
    \kwad{\sum_{x_{i+1}}P(x_{i+1}|X_1^i)
    \log\frac{P(x_{i+1}|X_1^i)}{R(x_{i+1}|X_1^i)}}
    =0.
  \end{align}
  Hence by Proposition \ref{theoPinsker} (Pinsker inequality), we
  derive on $1$-$P$-random points
  \begin{align}
    \lim_{n\to\infty}
    \frac{1}{n}\sum_{i=0}^{n-1}
    \kwad{\sum_{x_{i+1}}\abs{P(x_{i+1}|X_1^i)-R(x_{i+1}|X_1^i)}}^2
    =0.
  \end{align}
  Subsequently, the Cauchy-Schwarz inequality
  $\mean Y^2\ge (\mean Y)^2$ yields on $1$-$P$-random points
\begin{align}
  0
  &\ge
  \lim_{n\to\infty}
  \kwad{
  \frac{1}{n}\sum_{i=0}^{n-1}\sum_{x_{i+1}}
  \abs{P(x_{i+1}|X_1^i)-R(x_{i+1}|X_1^i)}}^2  
  \nonumber\\
  &=
    \kwad{
    \lim_{n\to\infty}
    \frac{1}{n}\sum_{i=0}^{n-1}\sum_{x_{i+1}}
    \abs{P(x_{i+1}|X_1^i)-R(x_{i+1}|X_1^i)}}^2
    \ge 0.
\end{align}
Consequently, $R$ is a $1$-universal forward estimator.
\end{proof}

Combining Theorem \ref{theoUniversal} and Theorem
\ref{theoBaileyInduced}, we obtain that predictor $f_R$ is
$1$-universal provided measure $R$ is $1$-universal and satisfies
condition (\ref{CondDomination})---if predictor $f_R$ is computable
itself.  Condition (\ref{CondDomination}) does not seem to have been
discussed in the literature of universal prediction.

\subsection{PPM measure}
\label{secPPM}

In this section, we will discuss the Prediction by Partial Matching
(PPM) measure. The PPM measure comes in several flavors and was
discovered gradually.  Cleary and Witten \cite{ClearyWitten84} coined
the name PPM, which we prefer since it is more distinctive, and
considered the adaptive Markov approximations $\PPM_k$ defined roughly
in equation (\ref{PPMkCond}). Later, Ryabko
\cite{Ryabko88en2,Ryabko08} considered the infinite series $\PPM$
defined in equation (\ref{PPMR}), called it the measure $R$, and
proved that it is a universal measure. Precisely, Ryabko used the
Krichevsky-Trofimov smoothing ($+1/2$) rather than the Laplace
smoothing ($+1$) applied in (\ref{PPMkCond}). This difference does not
affect universality.  As we will show now, series $\PPM$ provides an
example of a $1$-universal measure that satisfies condition
(\ref{CondDomination}) and thus yields a natural $1$-universal
predictor.

Upon the first reading, the definition of the PPM measure may appear
cumbersome but it is roughly a Bayesian mixture of all Markov chains
of all orders. Its universality can be then motivated by the fact that
Markov chains with rational transition probabilities are both
countable and dense in the class of stationary ergodic measures
\cite{Ryabko10b}. Our specific definition of measure $\PPM$ is as
follows.
\begin{Definition}[PPM measure]
  Let the alphabet be $\mathbb{X}=\klam{a_1,..,a_D}$, where $D\ge
  2$. Adapting definitions by
  \cite{ClearyWitten84,Ryabko88en2,Ryabko08,Debowski18}, the PPM
  measure of order $k\ge 0$ is defined as
  \begin{align}
   \label{PPMkCond}
    \PPM_k(x_1^n)
    &:=
      \begin{cases}
      D^{-k-1}\prod_{i=k+2}^n
      \frac{N(x_{i-k}^i|x_1^{i-1})+1}{N(x_{i-k}^{i-1}|x_1^{i-2})+D},
      & k\le n-2,
      \\
      D^{-n}, & k\ge n-1,
      \end{cases}
  \end{align}
  where the frequency of a substring $w_1^k$ in a string $x_1^n$ is
\begin{align}
    N(w_1^k|x_1^n):=\sum_{i=1}^{n-k+1}\boole{x_i^{i+k-1}=w_1^k}.
\end{align}
Subsequently,  we define the total PPM measure
\begin{align}
  \label{PPMR}
  \PPM(x_1^n)
  &:=
  \sum_{k=0}^\infty
  \kwad{\frac{1}{k+1}-\frac{1}{k+2}}\PPM_k(x_1^n)
  .
\end{align}  
\end{Definition}

Infinite series (\ref{PPMR}) is computable since
$\PPM_k(x_1^n)=D^{-n}$ for $k\ge n-1$.  The almost sure universality
of the total PPM measure follows by the Stirling approximation and the
Birkhoff ergodic theorem, see
\cite{Ryabko88en2,Ryabko08,Debowski18}. Since the Birkhoff ergodic
theorem can be effectivized for $1$-random points in form of
Proposition \ref{theoBirkhoff}, we obtain in turn this
effectivization.
\begin{Theorem}[effective PPM universality, cf.\ \cite{Ryabko08}]
  \label{theoPPMUniversal}
  Measure $\PPM$ is $1$-universal.
\end{Theorem}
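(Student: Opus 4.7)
My plan is to mirror the classical argument for universality of $\PPM$ (as in \cite{Ryabko88en2,Ryabko08,Debowski18}) and replace each appeal to the Birkhoff ergodic theorem by Proposition \ref{theoBirkhoff}. The statement splits into a lower bound $\liminf_{n\to\infty}-\frac{1}{n}\log\PPM(X_1^n)\ge h_P$, which is free: $\PPM$ is computable because $\PPM_k(x_1^n)=D^{-n}$ for $k\ge n-1$ and the sum in (\ref{PPMR}) therefore collapses to finitely many computable terms for each $n$, so Theorem \ref{theoCode} applied with $R=\PPM$ gives this bound directly on $1$-$P$-random points. All the substance of the proof lies in the matching upper bound $\limsup_{n\to\infty}-\frac{1}{n}\log\PPM(X_1^n)\le h_P$ for an arbitrary stationary ergodic $P$.

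For the upper bound I would first fix $k\ge 0$ and prove the auxiliary limit
\[\lim_{n\to\infty}-\frac{1}{n}\log\PPM_k(X_1^n)=h_P^{(k)},\qquad h_P^{(k)}:=\mean[-\log P(X_{k+1}|X_1^k)],\]
on $1$-$P$-random points. Regrouping the Laplace-smoothed product in (\ref{PPMkCond}) by $k$-gram context and applying Stirling's formula rewrites $-\frac{1}{n}\log\PPM_k(x_1^n)$ as the empirical conditional entropy of $(k+1)$-gram frequencies plus a deterministic $O((\log n)/n)$ remainder. For every $w\in\mathbb{X}^{k+1}$ the indicator $\boole{X_1^{k+1}=w}$ is computable, nonnegative and bounded, so Proposition \ref{theoBirkhoff} yields $n^{-1}N(w|X_1^n)\to P(X_1^{k+1}=w)$ on $1$-$P$-random points. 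Since there are only finitely many $w$ and $x\mapsto -x\log x$ is continuous on $[0,1]$ (with $0\log 0:=0$), the empirical conditional entropy converges to $h_P^{(k)}$, which establishes the auxiliary limit.

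The upper bound is then finished by observing that (\ref{PPMR}) gives $\PPM\ge[(k+1)(k+2)]^{-1}\PPM_k$ for every $k$, whence
\[\limsup_{n\to\infty}-\frac{1}{n}\log\PPM(X_1^n)\le h_P^{(k)}\]
on $1$-$P$-random points. Taking infimum over $k$ and invoking the standard fact that $h_P^{(k)}$ is non-increasing with $\inf_k h_P^{(k)}=\lim_{k\to\infty}h_P^{(k)}=h_P$ for stationary $P$ closes the argument.

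The main obstacle is the Stirling bookkeeping that turns the Laplace product into an empirical conditional entropy with a controlled remainder; this is purely deterministic and already written down in the references above. Once it is in place the probabilistic content of the proof reduces, for each fixed $k$, to convergence of finitely many bounded computable ergodic averages, which Proposition \ref{theoBirkhoff} delivers on all $1$-$P$-random points at no extra cost. No effectivization step beyond the effective Birkhoff theorem and the effective source coding inequality is needed.
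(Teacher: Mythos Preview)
Your proposal is correct and follows essentially the same route as the paper: computability via truncation of the series at $k=n-1$, the Stirling bound relating $-\log\PPM_k$ to the empirical conditional entropy (delegated to \cite{Debowski18}), convergence of $(k+1)$-gram frequencies on $1$-$P$-random points via Proposition~\ref{theoBirkhoff}, the weight bound $\PPM\ge[(k+1)(k+2)]^{-1}\PPM_k$ for the $\limsup$, and the effective Barron lemma for the $\liminf$. The only cosmetic difference is that you invoke Theorem~\ref{theoCode} for the lower bound whereas the paper cites Proposition~\ref{theoBarron} directly, but the former is an immediate corollary of the latter together with the effective SMB theorem.
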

\begin{proof}
  Computability of the PPM measure follows since series (\ref{PPMR})
  can be truncated with the constant term $\PPM_k(x_1^n)=D^{-n}$ for
  $k\ge n-1$ and thus values $\PPM(x_1^n)$ are rational. To show
  $1$-universality of the PPM measure, we observe the following. Using
  the Stirling approximation, the PPM measure can be related to the
  empirical entropy of the respective string. The empirical
  (conditional) entropy of string $x_1^n$ of order $k\ge 0$ is defined
  as
\begin{align}
  h_k(x_1^n)
  &:=
  \sum_{w_1^{k+1}\in \mathbb{X}^{k+1}}
  \frac{N(w_1^{k+1}|x_1^n)}{n-k}
    \log \frac{N(w_1^k|x_1^{n-1})}{N(w_1^{k+1}|x_1^n)}
  .
\end{align}
In particular, by Theorem A4 in \cite{Debowski18}, we have the bound
\begin{align}
  0\le
  -\log\PPM_k(x_1^n)
  -
  k\log D
  -
  (n-k)h_k(x_1^n)
  \le
  D^{k+1}\log[e^2n]
  .
  \label{PPMbound}
\end{align}
Subsequently, by Proposition \ref{theoBirkhoff} (effective Birkhoff ergodic
theorem), on $1$-$P$-random points we have
\begin{align}
  \lim_{n\to\infty} \frac{N(w_1^{k+1}|X_1^n)}{n-k}=P(w_1^{k+1}).
\end{align}
Then by (\ref{PPMbound}),
\begin{align}
  \lim_{n\to\infty}\frac{1}{n}\kwad{-\log\PPM_k(X_1^n)}=h_{k,P}:=\mean\kwad{-\log
  P(X_{k+1}|X_1^k)}.
\end{align}
Since
\begin{align}
  \label{PPMPPMk}
  -\log\PPM(x_1^n)\le 2\log (k+2)-\log\PPM_k(x_1^n)
\end{align}
then
\begin{align}
  \limsup_{n\to\infty}\frac{1}{n}\kwad{-\log\PPM(X_1^n)}\le
  \inf_{k\ge 0} h_{k,P}=h_{P}
\end{align}
on $1$-$P$-random points, whereas the reverse inequality for the lower
limit follows by Proposition \ref{theoBarron} (effective Barron lemma).
\end{proof}

Finally, we can show that predictor $f_{\PPM}$ induced by the PPM
measure is $1$-universal. First, we notice explicitly these bounds:
\begin{Theorem}[PPM bounds]
  \label{theoPPMDomination}
  We have 
 \begin{align}
  \label{PPMDomination}
   -\log \PPM(x_1^n)&\le 2\log (n+1)+n\log D,
   \\
   \label{PPMCondDomination}
   -\log \PPM(x_{n+1}|x_1^n)&\le 3\log (n+D).
   \end{align}
\end{Theorem}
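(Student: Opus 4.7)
The plan is to handle the two inequalities separately, working directly from the definition of $\PPM$.

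For (\ref{PPMDomination}) I would just instantiate bound (\ref{PPMPPMk}) at $k = n-1$. By the boundary clause of (\ref{PPMkCond}), $\PPM_{n-1}(x_1^n) = D^{-n}$, so the inequality $-\log \PPM(x_1^n) \le 2\log(k+2) - \log \PPM_k(x_1^n)$ at this choice of $k$ gives at once $-\log \PPM(x_1^n) \le 2\log(n+1) + n\log D$. Nothing further is needed.

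For (\ref{PPMCondDomination}) I would prove the stronger ratio estimate $\PPM(x_1^{n+1})/\PPM(x_1^n) \ge 1/(n+D)$, whence $-\log \PPM(x_{n+1}|x_1^n) \le \log(n+D) \le 3\log(n+D)$. The crucial reduction is that the mixture weights $\frac{1}{k+1}-\frac{1}{k+2}$ in (\ref{PPMR}) do not depend on the string, so it is enough to establish the pointwise estimate $\PPM_k(x_1^{n+1}) \ge \PPM_k(x_1^n)/(n+D)$ for every $k \ge 0$ and then sum these inequalities against the weights.

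The pointwise estimate splits into three regimes according to where $k$ sits relative to the boundary $k = n-1$ of definition (\ref{PPMkCond}). In the generic regime $k \le n-2$ the ratio $\PPM_k(x_1^{n+1})/\PPM_k(x_1^n)$ is the single additional factor $\frac{N(x_{n+1-k}^{n+1}|x_1^n)+1}{N(x_{n+1-k}^n|x_1^{n-1})+D}$ contributed to the product; the numerator is at least $1$ and the denominator is at most $(n-k)+D \le n+D$ because a pattern of length $k$ admits at most $n-k$ starting positions in a string of length $n-1$. In the crossing regime $k = n-1$ the two clauses of (\ref{PPMkCond}) apply in opposite ways at string lengths $n$ and $n+1$; writing both values out, the ratio collapses to $\frac{N(x_2^{n+1}|x_1^n)+1}{N(x_2^n|x_1^{n-1})+D} \ge 1/(D+1)$, since each occurrence count is at most $1$. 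In the tail $k \ge n$ the boundary clause applies on both sides, giving exactly $1/D$. Each of these is at least $1/(n+D)$, which closes the argument. The only mildly delicate point is the bookkeeping at the crossing $k = n-1$, where one must notice that the two definitional clauses agree at $D^{-n}$ and that exactly one factor is added in passing from $x_1^n$ to $x_1^{n+1}$.
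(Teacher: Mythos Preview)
Your argument for (\ref{PPMDomination}) is identical to the paper's. For (\ref{PPMCondDomination}) your proof is correct but proceeds differently. The paper does not bound each $\PPM_k$ ratio; instead it picks the single index $G(x_1^n)$ maximizing $\PPM_k(x_1^n)$, uses $\PPM(x_1^n)\le \PPM_{G(x_1^n)}(x_1^n)$ for the denominator and (\ref{PPMPPMk}) at $k=G(x_1^n)$ for the numerator, arriving at $-\log\PPM(x_{n+1}|x_1^n)\le 2\log(G(x_1^n)+2)-\log\PPM_{G(x_1^n)}(x_{n+1}|x_1^n)\le 2\log(n+1)+\log(n+D)$. Your route---bounding $\PPM_k(x_1^{n+1})/\PPM_k(x_1^n)\ge 1/(n+D)$ for every $k$ and then summing against the common weights---is more elementary, avoids the $\argmax$ device, and in fact yields the sharper constant $-\log\PPM(x_{n+1}|x_1^n)\le \log(n+D)$ rather than $3\log(n+D)$. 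The paper's method, on the other hand, would transfer more readily to mixtures whose components do not all share a uniform one-step lower bound, since it only needs a good bound at the single maximizing index.
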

\begin{proof}
  Observe that $\PPM_k(x_1^n)=D^{-n}$ for $k\ge n-1$. Hence by
  (\ref{PPMPPMk}), we obtain claim (\ref{PPMDomination}).  The
  derivation of claim (\ref{PPMCondDomination}) is slightly longer.
  First, by the definition of $\PPM_k$, we have
  \begin{align}
    -\log \PPM_k(x_{n+1}|x_1^n)\le
    \log \kwad{N(x_{n-k+1}^{n}|x_1^{n-1})+D}
    \le \log(n+D).
  \end{align}
  Now let $G(x_1^n)$ be the minimal $k$ such that $\PPM_k(x_1^n)$ is
  maximal. We have $G(x_1^n)\le n-1$, since $\PPM_k(x_1^n)=D^{-n}$ for
  $k\ge n-1$. Moreover, we have a bound reverse to  (\ref{PPMPPMk}),
  namely
  \begin{align}
    -\log\PPM(x_1^n)\ge -\log\PPM_{G(x_1^n)}(x_1^n).
  \end{align}
  Combining the above with (\ref{PPMPPMk}) yields  
  \begin{align}
    &-\log \PPM(x_{n+1}|x_1^n)
      =
      -\log \PPM(x_1^{n+1})+\log \PPM(x_1^n)
      \nonumber\\
    &\le  
      2\log (G(x_1^n)+2)-\log \PPM_{G(x_1^n)}(x_1^{n+1})
      +\log \PPM_{G(x_1^n)}(x_1^n)
      \nonumber\\
    &=
      2\log (G(x_1^n)+2)
      +\log \PPM_{G(x_1^n)}(x_{n+1}|x_1^n)
      \le  
      3\log (n+D).
  \end{align}
  
\end{proof}

Now comes the main theorem.
\begin{Theorem}
  \label{theoPPMUniversalII}
  Predictor $f_{\PPM}$ is $1$-universal.
\end{Theorem}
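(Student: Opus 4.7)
The plan is to assemble three results already proved in the paper. Theorem \ref{theoPPMUniversal} establishes that $\PPM$ is $1$-universal as a measure. Theorem \ref{theoUniversal} then turns any $1$-universal measure satisfying the conditional growth bound (\ref{CondDomination}) into a $1$-universal forward estimator. Finally, Theorem \ref{theoBaileyInduced} promotes the induced predictor of such a forward estimator to a $1$-universal predictor, provided that it is itself computable. Hence the entire argument reduces to two routine checks: that $\PPM$ satisfies (\ref{CondDomination}), and that $f_{\PPM}$ is computable.

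For the growth condition, I would invoke inequality (\ref{PPMCondDomination}) of Theorem \ref{theoPPMDomination}, namely $-\log \PPM(x_{n+1}|x_1^n)\le 3\log(n+D)$. Setting $\epsilon_n:=3\log(n+D)/\sqrt{n/\ln n}$, one has
\begin{align*}
  \epsilon_n
  &=3\log(n+D)\cdot\frac{\sqrt{\ln n}}{\sqrt{n}}
  \xrightarrow[n\to\infty]{} 0,
\end{align*}
since $\log n=o(\sqrt{n/\ln n})$ by a direct comparison of powers and logarithms. Consequently $-\log\PPM(x_{n+1}|x_1^n)\le\epsilon_n\sqrt{n/\ln n}$ with $\epsilon_n\to 0$, which is exactly (\ref{CondDomination}). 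Combined with Theorem \ref{theoPPMUniversal}, Theorem \ref{theoUniversal} then yields that $\PPM$ is a $1$-universal forward estimator.

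For computability of $f_{\PPM}$, I would reuse the observation already exploited in the proof of Theorem \ref{theoPPMUniversal}: because $\PPM_k(x_1^n)=D^{-n}$ for $k\ge n-1$, the defining series (\ref{PPMR}) collapses to a finite sum of rationals, so $\PPM(x_1^n)$ is a computable positive rational. The conditional value $\PPM(x_{n+1}|x_1^n)=\PPM(x_1^{n+1})/\PPM(x_1^n)$ is then rational as well, and the $D$ candidate values can be compared exactly; combining this with the fixed tie-breaking rule $a_1<\cdots<a_D$ produces a computable total function $f_{\PPM}$. Theorem \ref{theoBaileyInduced} then delivers the conclusion. There is essentially no obstacle here: the only substantive ingredient is the comfortable gap between the PPM bound of order $\log n$ and the admissible growth $\sqrt{n/\ln n}$, which is precisely the gap singled out in the introduction as inviting further research.
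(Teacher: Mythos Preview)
Your proposal is correct and follows essentially the same route as the paper: the paper's proof likewise argues computability of $f_{\PPM}$ from rationality of the values $\PPM(x_1^n)$ and then invokes Theorems \ref{theoPPMUniversal}, \ref{theoPPMDomination}, and \ref{theoUniversal} (with the passage to the predictor via Theorem \ref{theoBaileyInduced} already noted just after Theorem \ref{theoUniversal}). Your write-up simply spells out the verification of (\ref{CondDomination}) a bit more explicitly than the paper does.
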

\begin{proof}
  Computability of predictor $f_{\PPM}$ follows since values
  $\PPM(x_1^n)$ are rational so the least symbol of those having the
  maximal conditional probability can be computed in a finite
  time. Consequently, the claim follows by Theorems
  \ref{theoUniversal}, \ref{theoPPMUniversal}, and
  \ref{theoPPMDomination}.
\end{proof}

We think that $1$-universality of predictor $f_{\PPM}$ is quite
expected and intuitive.  But as we can see, the PPM measure satisfies
condition (\ref{CondDomination}) with a large reserve.  It is an open
question whether there are $1$-universal measures such that
conditional probabilities $R(x_{n+1}|x_1^n)$ converge to zero much
faster than for the PPM measure but they still induce $1$-universal
predictors. It would be interesting to find such measures. Maybe they
have some other desirable properties, also from a practical point of
view.

\section*{Funding}

This work was supported by the National Science Centre Poland
grant 2018/31/B/HS1/04018.

\section*{Acknowledgments}

The authors are grateful to the anonymous reviewers of unaccepted
earlier conference versions of the paper, who provided a very
stimulating and encouraging feedback.  Additional improvements to the
paper were inspired by participants of the Kolmogorov seminar in
Moscow at which this work was presented by the first author. Finally,
the authors express their gratitude to Dariusz Kalociński for his
comments and proofreading. Both authors declare an equal contribution
to the paper.

\bibliographystyle{plainurl}

\bibliography{coding_prediction_arxiv}

\end{document}